\newcommand{\scr}[1]{\mathscr{#1}}
\newcommand{\bb}[1]{\mathbb{#1}}
\newcommand{\cal}[1]{\mathcal{#1}}
\newcommand{\N}{\mathbb{N}}	
\newcommand{\Z}{\mathbb{Z}}	
\newcommand{\R}{\mathbb{R}}	
\newcommand{\dd}{\,\mathrm{d}}	
\newcommand{\de}{\partial}		
\newcommand{\THEN}{\Rightarrow}	
\newcommand{\HH}{\bb H}
\newcommand{\spt}{\mathrm{spt}}
\newcommand{\loc}{\mathrm{loc}}
\newcommand{\grad}{\nabla}
\newcommand{\LL}{\mathcal L}
\newcommand{\did}{\,\mathrm{d}}  
   \def\XXint#1#2#3{{\setbox0=\hbox{$#1{#2#3}{\int}$}
        \vcenter{\hbox{$#2#3$}}\kern-.5\wd0}}
\theoremstyle{plain}
\newtheorem{proposition}{Proposition}[section]
\newtheorem{theorem}[proposition]{Theorem}
\newtheorem{lemma}[proposition]{Lemma}
\newtheorem{corollary}[proposition]{Corollary}
\newtheorem{thm}{Theorem}[section]
\theoremstyle{definition}
\newtheorem{definition}[proposition]{Definition}
\newtheorem{remark}[proposition]{Remark}
\theoremstyle{remark}
\title[
Sobolev Bernstein problem in the Heisenberg group]{
The Bernstein problem for Sobolev intrinsic graphs in the Heisenberg group
}
\author[Nicolussi Golo]{Sebastiano Nicolussi Golo}
\address[Nicolussi Golo]{Department of Mathematics, University of Fribourg, Ch. du Musée 23, 1700 Fribourg, Orcid ID: https://orcid.org/0000-0002-3773-6471}
\email{sebastiano2.72@gmail.com}
\author[Serra Cassano]{Francesco Serra Cassano}
\address[Serra Cassano]{Dipartimento di Matematica, Università di Trento, Via Sommarive 14, 38123 Trento, Italy}
\email{francesco.serracassano@unitn.it}
\author[Vedovato]{Mattia Vedovato}
\address[Vedovato]{Würth Srl}
\email{vedovato.mattia@gmail.com}
\subjclass[2010]{%
	53C17,
	49Q20
	}
\keywords{%
	Sub-Riemannian Geometry, %
	Sub-Riemannian Perimeter, %
	Heisenberg Group, %
	Bernstein Problem%
	}
\date{\today}
\begin{document}
\maketitle

\begin{abstract}
	In the first Heisenberg group,
	we study entire, locally Sobolev intrinsic graphs 
	that are stable for the sub-Riemannian area.
	We show that, under appropriate integrability conditions for the derivatives,	
	the intrinsic graph 
	must be an intrinsic plane, i.e., a coset of a two dimensional subgroup.
	This result extends \cite{MR3984100} beyond the Lipschitz class.
\end{abstract}

\setcounter{tocdepth}{1}
\phantomsection
\addcontentsline{toc}{section}{Contents}
\tableofcontents


\section{Introduction}

This paper is an improvement of the results from~\cite{MR3984100}:
in the first Heisenberg group $\bb H$,
we extend the Bernstein problem from the Lipschitz to the Sobolev class.
We refer to \cite{MR3984100} for a broader introduction of the subject and to \cite{SerraCassano2020} for a general account of the Bernstein problem in  Heisenberg groups.
For recent results in the study of minimal surfaces in the sub-Riemannian first Heisenberg see \cite{zbMATH07541296,NGR}.
There has been progress also in the sub-Finsler first Heisenberg group,
see~\cite{MR4793141,MR4676403,MR4603626,MR4529384},
and in higher Heisenberg groups, see~\cite{2024arXiv240920359G}.

For $\omega\subset\R^2$ open and $f:\omega\to\R$ of class $W^{1,1}_\loc(\omega)$, 
we consider the area functional  
\[
\scr A_f(K) = \int_K \sqrt{ 1 + (\grad^ff)^2 } \did\LL^2 ,
\]
where $K\subset\omega$ is compact and $\grad^ff = \de_\eta f + f\de_\tau f$,
with $(\eta,\tau)$ coordinates of $\R^2$.
Let us recall that $\scr A_f(\omega)$ represents the sub-Riemannian area of the intrinsic graph induced by $f$; see Section~\ref{sec06152116} below.
We can state the Bernstein problem for intrinsic graphs in the first Heisenberg group
as to characterize $f\in W^{1,1}_\loc(\R^2)$ that minimizes $\scr A_f(K)$ for all $K\Subset \R^2$.
In other words, we look for functions $f\in W^{1,1}_\loc(\R^2)$ so that, for every $K\Subset\R^2$, if $f=g$ on $\R^2\setminus K$, then $\scr A_f(K) \le \scr A_g(K) $.

We say that $f\in W^{1,1}_\loc(\omega)$ is \emph{stationary} if, for all $\phi\in C^\infty_c(\omega)$,
\[
\frac{\dd}{\dd\epsilon}|_{\epsilon=0}\scr A_{f+\epsilon \phi}(\spt(\phi)) = 0 .
\]
A function $f\in W^{1,1}_\loc(\omega)$ is \emph{stable} if it is stationary and, for all $\phi\in C^\infty_c(\omega)$, 
\[
\frac{\dd^2}{\dd\epsilon^2}|_{\epsilon=0}\scr A_{f+\epsilon \phi}(\spt(\phi)) \ge 0 .
\]

In~\cite{MR3984100}, the first two named authors proved that the only stable locally Lipschitz functions $f:\R^2\to\R$ are 
those of the form
\[
f(\eta,\tau) = a\eta + b
\]
for some $a,b\in\R$.
These functions are characterized as those whose intrinsic graph in the Heisenberg group is a vertical plane: 
see Remark~\ref{rem669a6cca} below.

The same conclusion was known for $f$ of class $C^2$ after~\cite{MR2333095}, and for $f$ of class $C^1$ after~\cite{Galli2015}.

In the present paper we weaken the hypothesis on $f$ to be of class $ W^{1,q}_\loc$.
We need $q$ to be larger than 4 and we also need an additional exponential integrability of the derivative $\de_\tau f$.
Under these hypothesis, we can obtain the same conclusion using the strategy of~\cite{MR3984100} and~\cite{MR2333095}, thanks to the existence and regularity of Lagrangian coordinates proved in~\cite{zbMATH07739119}.

Our main result is the following theorem:

\begin{thm}[Bernstein problem with Sobolev regularity]\label{thm669a693f}
	Suppose $f\in W^{1,q}_\loc(\R^2)$ with $q>4$,
	and
	\begin{equation}\label{eq669a34fb}
		\exists \kappa>0\text{ such that }
		\exp(\kappa|\de_\tau f|) \in L^1_\loc(\R^2) .
	\end{equation}
	If $f$ is stable, then there are $a,b\in\R$ such that $f(\eta,\tau) = a\eta + b$,
	for each $(\eta,\tau)\in\R^2$
\end{thm}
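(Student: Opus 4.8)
The plan is to follow the strategy that already succeeded in the Lipschitz case of \cite{MR3984100}, now leveraging the Lagrangian coordinates of \cite{zbMATH07739119} to handle the weaker Sobolev regularity. First I would write out the first and second variation of the area functional $\scr A_f$ at a stable $f$. Stationarity gives that $f$ is a weak solution of the minimal surface (Burgers'-type) equation $\grad^f(\grad^f f)=0$, i.e.\ the intrinsic gradient $w:=\grad^f f=\de_\eta f+f\de_\tau f$ is constant along the characteristic curves of the vector field $\grad^f=\de_\eta+f\de_\tau$. The hypotheses $q>4$ together with the exponential integrability \eqref{eq669a34fb} are exactly what is needed to invoke \cite{zbMATH07739119}: they guarantee existence, uniqueness, and enough regularity of the characteristic (Lagrangian) flow $\gamma$ of $\grad^f$, so that one may change variables to \emph{Lagrangian coordinates} in which the characteristics become straight lines and $w$ is literally constant along each of them.

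Next I would transport the stability inequality into these Lagrangian coordinates. The second variation produces a nonnegative quadratic form in $\phi$; the classical manipulation (integration by parts along characteristics, as in \cite{MR2333095} and \cite{MR3984100}) reduces it to an inequality of the form
\[
\int \bigl(\de_\tau f\bigr)\,\psi^2 \did\LL^2 \le \int |\grad \psi|^2 \did\LL^2
\]
for all admissible test functions $\psi$, after rewriting in coordinates adapted to the flow; the upshot is that stability forces a one-dimensional stability (or ``no conjugate points'') condition along each characteristic line. I would then use the global geometry of the Lagrangian chart: because the characteristics foliate the entire plane $\R^2$ and are complete straight lines, the stability inequality must hold on arbitrarily long segments, and a standard Sturm-type comparison (or the test-function argument producing a Hardy/Poincaré contradiction on long intervals) rules out any nonconstant behaviour of the coefficient $\de_\tau f$ along and across characteristics.

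From this I would deduce that $\de_\tau f \equiv 0$ a.e.\ and that $w=\grad^f f$ is globally constant; combined with completeness of the flow on all of $\R^2$ this pins down $f$ up to the affine family. Concretely, $\de_\tau f=0$ gives $f=f(\eta)$, and then $w=\de_\eta f=\text{const}=:a$ forces $f(\eta,\tau)=a\eta+b$, which is precisely the vertical-plane conclusion of Theorem~\ref{thm669a693f}.

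The main obstacle I expect is purely at the level of regularity: in the Lipschitz setting the characteristic flow is classical and all integrations by parts are immediate, whereas here $f$ is only $W^{1,q}_\loc$, so the characteristics need not be classical solutions and the change of variables is delicate. Making the variational identities rigorous — justifying the integration by parts along characteristics and the change to Lagrangian coordinates in the quadratic second-variation form — is where the quantitative assumptions $q>4$ and the exponential bound \eqref{eq669a34fb} must be used precisely, via the fine flow regularity from \cite{zbMATH07739119}. Once those analytic technicalities are cleared, the geometric/comparison part of the argument is essentially the same as in \cite{MR3984100} and \cite{MR2333095}.
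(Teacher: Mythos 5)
Your outline follows the paper's strategy quite closely: Lagrangian coordinates from \cite{zbMATH07739119} (made rigorous under $q>4$ and \eqref{eq669a34fb}), the first variation transported to those coordinates to show that $\grad^ff$ is constant along characteristics so that $\chi(t,\zeta)=\tfrac{a(\zeta)}{2}t^2+b(\zeta)t+\zeta$ (Theorem~\ref{thm6698ea1a}, Corollary~\ref{cor669a6101}), and then the second variation in Lagrangian coordinates combined with the Hardy/Sturm test-function lemma of \cite{MR2333095} (Lemmas~\ref{lem04061131} and~\ref{lem06070850}). Two points, however, do not close as you state them.

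First, the endgame. The Sturm/Hardy comparison does not ``rule out any nonconstant behaviour of $\de_\tau f$''. Applied for a.e.\ fixed $\zeta$ to the weight $h(t)=a'(\zeta)t^2/2+b'(\zeta)t+1$ that actually appears in the transported second variation \eqref{eq06071139} (your displayed inequality with $|\grad\psi|^2$ and potential $\de_\tau f$ is not the correct form), Lemma~\ref{lem06070850} yields only the borderline \emph{equality} $2a'(\zeta)=b'(\zeta)^2$, which has plenty of nonzero solutions (e.g.\ $a'=1/2$, $b'=1$). The vanishing $a'=b'=0$ requires a second, independent input: the dichotomy of Lemma~\ref{lem04061138}, namely that for a.e.\ $\zeta$ either $a'=b'=0$ or $2a'>b'^2$ \emph{strictly}. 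That strict inequality comes not from stability but from the structure of the Lagrangian homeomorphism: if $2a'=b'^2$ with $a'\neq0$, the Jacobian $\de_\zeta\chi=h$ has a double root in $t$, and $\de_\tau f\circ\Psi=(a't+b')/h$ then fails to be locally $s$-integrable (Lemma~\ref{lem688b709b}), contradicting the composition estimate of Proposition~\ref{prop66964d64}. So the integrability hypotheses are used a second time here, beyond legitimizing the change of variables. Second, the global ($\epsilon=\infty$) Lagrangian chart on $\R^2$ is not provided by \cite{zbMATH07739119}, which is local; globality is obtained only \emph{after} the first variation forces the parabolic form of $\chi$, via \cite[Lemmas 3.3 and 3.5]{MR3753176} together with the explicit formula for $\Psi^{-1}$ in Corollary~\ref{cor669a6101}. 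Your sketch assumes a global, complete foliation from the outset, which is not available a priori.
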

The proof of Theorem~\ref{thm669a693f} is at the end of Section~\ref{sec09022317}.

In sections 7 and 8 of \cite{MR3984100}, two examples of stable intrinsic graphs were presented.
Those two examples do not satisfy the hypothesis of Theorem~\ref{thm669a693f} because they are graphs of functions $f\in W^{1,p}_\loc(\R^2)$ with $p=2$ but not $p=4$.
Those examples are now known not to be perimeter minimizers, as shown in \cite[Theorem 1.3]{zbMATH07541296}.

In the proof of Theorem~\ref{thm669a693f}, we show that the intrinsic graph of $f$ is ruled by horizontal lines.
We then prove that the graph of $f$ is a vertical plane using the stability of $f$, not its minimality.
This is different from \cite[Theorem 1.3]{zbMATH07541296}, where it has been proved that as soon as the intrinsic graph $\Gamma_f$ of $f$ is area minimizing and ruled by horizontal straight lines (i.e., isometric embeddings of $\R$), then $\Gamma_f$ is a vertical plane.
Minimality clearly implies stability, but the converse may not hold. 
Indeed, there are examples of stable intrinsic graphs in the first Heisenberg group $\bb H$ which are not area-minimizing;
see \cite[Theorem 7.1]{MR3984100} and \cite[Theorem 1.1]{zbMATH07541296}.

Theorem~\ref{thm669a693f} leaves open the discussion whether~\eqref{eq669a34fb} and the lower bound ``$q>4$'' are necessary to show that stability implies affinity of $f$.
Our main reason to have~\eqref{eq669a34fb} is to use the tools provided by~\cite{zbMATH07739119,zbMATH07620710}.
From those papers, we actually get uniqueness of the horizontal lines ruling the intrinsic graph of $f$.
More precisely, if $f$ satisfies the hypothesis of Theorem~\ref{thm669a693f} and $\Gamma_f$ is the intrinsic graph of $f$, we obtain from \cite{zbMATH07739119} that, for every point $p\in\Gamma_f$, there exists a unique horizontal curve in $\Gamma_f$ passing through $p$;
see Theorem~\ref{thm667be351} below and its reference.
Such uniqueness does not seem to be crucial in the proof of Theorem~\ref{thm669a693f};
see for instance \cite[Remark 3.7]{MR3753176} for an example with non-uniqueness of the horizontal curves.

We remark that having the intrinsic graph ruled by horizontal lines is not equivalent to being minimal.
On the one hand, there are minimal graphs that are not ruled by horizontal lines, as shown in~\cite{MR2333095}, see also~\cite{Ritore2009}.
On the other hand, there are graphs ruled by horizontal lines that are not minimal, as for instance the examples in~\cite{MR3984100}, or every ``graphical strip'' as in~\cite{MR2472175}.

It is an open question: Is it true that, if $f:\R^2\to\R$ is of class $C^1_{\bb W}$, that is, if both $f$ and $\grad^ff$ are continuous, and if the intrinsic graph of $f$ minimizes the sub-Riemannian area, then the intrinsic graph of $f$ is ruled by horizontal lines?

Finally, we remind the reader that for higher Heisenberg groups, very little is known, even for smooth intrinsic graphs; see \cite{SerraCassano2020} and references therein.

\subsection*{Plan of the paper}
In the preliminary section~\ref{sec06152116}, we introduce the main concepts and notations.
Section~\ref{sec68009c34} focuses on Bi-Sobolev Lagrangian homeomorphisms, which is a way to describe intrinsic graphs in the Heisenberg group in terms of their horizontal curves.
In Section~\ref{sec68009c4c} we show the consequences of the first variation.
Section~\ref{sec09022317} focuses on consequences of the second variation.
Finally, we will present a few examples in Section~\ref{sec68009c6b}.
We have added a short Appendix~\ref{sec68009c83} where we describe the relations between several conditions on the exponents of integrability that appear along the paper.

\subsection*{Acknowledgments}
SNG was partially supported 
by the Swiss National Science Foundation (grant 200021-204501 ‘{\it Regularity of sub-Riemannian geodesics and applications}’), 
by the European Research Council (ERC Starting Grant 713998 GeoMeG `{\it Geometry of Metric Groups}'), 
by the Academy of Finland 
(grant 288501 `{\it Geometry of subRiemannian groups}', 
grant 322898 `{\it Sub-Riemannian Geometry via Metric-geometry and Lie- group Theory}', 
grant 328846 `{\it Singular integrals, harmonic functions, and boundary regularity in Heisenberg groups}' 
grant 314172 `{\it Quantitative rectifiability in Euclidean and non-Euclidean spaces}'). 

FSC is member of Gruppo Nazionale per l’Analisi
Matematica, la Probabilità e le loro Applicazioni (GNAMPA), of the Istituto Nazionale di Alta Matematica (INdAM).
In particular, FSC was partially supported by
INdAM–GNAMPA 2025 Project ``Structure of sub-Riemannian hypersurfaces in Heisenberg groups'';
INdAM–GNAMPA 2023 Project ``Equazioni diﬀerenziali alle derivate parziali di tipo misto o dipendenti da campi di vettori'';
INdAM–GNAMPA 2022 Project ``Analisi geometrica in strutture subriemanniane''.
FSC was also partially supported by the European Union under {\it NextGenerationEU}. PRIN 2022, ``Regularity problems in sub-Riemannian structures'', Prot. n. 2022F4F2LH;
and by MIUR-PRIN 2017 Project ``Gradient flows, Optimal Transport and Metric Measure''.

\section{Preliminaries and notation}\label{sec06152116}
The Heisenberg group $\HH$ is represented in this paper as $\R^3$ endowed with the group operation
\[
(x,y,z)\cdot (x',y',z') = \left(x+x',y+y',z+z'+\frac12(xy'-x'y) \right) .
\]
In these coordinates, an orthonormal frame of the horizontal distribution is
\[
X = \de_x - \frac{y}{2} \de_z,
\qquad
Y = \de_y + \frac{x}{2} \de_z .
\]
The \emph{sub-Riemannian perimeter} of a measurable set $G\subset\HH$ in an open set $\Omega\subset\HH$ is
\[
P_{sR}(G;\Omega) = \sup\left\{ \int_G (X\psi_1+Y\psi_2) \did\cal L^3 : \psi_1,\psi_2\in C^\infty_c(\Omega),\ \psi_1^2+\psi_2^2\le 1 \right\} ,
\]
where $X\psi_1+Y\psi_2$ is the divergence of the vector field $\psi_1X+\psi_2Y$.
A set $G$ is a \emph{perimeter minimizer} in $\Omega\subset\HH$ if for every $F\subset\HH$ measurable with $(G\setminus F)\cup (F\setminus G)\Subset \Omega$, it holds $P_{sR}(G;\Omega) \le P_{sR}(F;\Omega)$.
A set $G$ is a \emph{local perimeter minimizer} in $\Omega\subset\HH$ if every $p\in\Omega$ has a neighborhood $\Omega'\subset\Omega$ such that $G$ is a perimeter minimizer in $\Omega'$.

Given a function $f:\omega\to\R$, $\omega\subset\R^2$, its intrinsic graph $\Gamma_f\subset\HH$ is the set of points
\[
(0,\eta,\tau)(f(\eta,\tau),0,0) = \left( f(\eta,\tau) , \eta , \tau-\frac12 \eta f(\eta,\tau) \right),
\]
for $(\eta,\tau)\in\omega$.
The \emph{intrinsic gradient} of $f$ is $\nabla^ff$, where $\nabla^f=\de_\eta+f\de_\tau$ is a vector field on $\omega$.
The function $\nabla^ff:\omega\to\R$ is well defined when $f\in W^{1,1}_\loc(\omega)\cap C^0(\omega)$.

\begin{remark}\label{rem669a6cca}
	If $f$ is of the form $f(\eta,\tau) = a\eta+b$
	for some $a,b\in\R$, then 
	\begin{align*}
	\Gamma_f 
	&= \left\{ \left( a\eta+b , \eta , \tau - a\eta^2/2 - b \eta/2 \right) :\ \eta,\tau\in\R \right\} \\
	&= (b,0,0) \cdot \left\{ \left( a\eta , \eta , z \right) :\ \eta,z\in\R \right\} \\
	&= \{(x,y,z) \in\R^3 : x= ay+b \}
	\end{align*}
	is a vertical plane.
\end{remark}

The \emph{graph area functional} is defined, for every $E\subset \omega$ measurable, by
\[
\scr A_f(E) := \int_E \sqrt{1+(\nabla^ff)^2} \did\cal L^2 .
\]
Such area functional descends from the perimeter measure of the graph, that is, $\scr A_f(E) = P_{sR}(G_f\cap (E\cdot\R))$, where $G_f:=\{(0,y,t)\cdot(\xi,0,0):\xi\le f(y,t)\}$ is the subgraph of $f$, and $E\cdot\R=\{(0,y,t)\cdot(\xi,0,0):\xi\in\R,\ (y,t)\in E\}$;
see \cite[Theorem 3.4]{MR2455341}.
A function $f\in W^{1,1}_\loc(\omega)\cap C^0(\omega)$ is \emph{(locally) area minimizing} if $G_f$ is (locally) perimeter minimizing in $\omega\cdot\R$.

We say that $f\in W^{1,1}_\loc(\omega)$ is \emph{stationary} if for all $\phi\in C^\infty_c(\omega)$
\begin{equation}\label{eq669a073c}
I_f(\phi) := \left.\frac{\dd}{\dd\epsilon}\scr A_{f+\epsilon\phi}(\spt(\phi))\right|_{\epsilon=0} = 0 .
\end{equation}
We say that $f\in W^{1,1}_\loc(\omega)$ is \emph{stable} if it is stationary and for all $\phi\in C^\infty_c(\omega)$
\begin{equation}\label{eq669a0749}
II_f(\phi) := \left.\frac{\dd^2}{\dd\epsilon^2}\scr A_{f+\epsilon\phi}(\spt(\phi))\right|_{\epsilon=0} \ge 0 .
\end{equation}
The functions $I_f$ and $II_f$ are called \emph{first} and \emph{second variation} of $f$, respectively.
It is clear that, if $f$ is a local area minimizer, then it is stable.

By~\cite[Remark 3.9]{MR2455341}, if $f\in W^{1,1}_\loc(\omega)\cap C^0(\omega)$, for some $\omega\subset\R^2$ open, then
\begin{align*}
	I_f(\phi) &= - \int_{\omega} \frac{\grad^ff}{\sqrt{1+(\grad^ff)^2}} (\grad^f\phi+\de_tf\,\phi) \did\LL^2 , \\
	II_f(\phi) &=\int_{\omega} 
			\left[ \frac{(\grad^f\phi+\de_tf\,\phi)^2}{\left( 1+(\grad^ff)^2 \right)^{3/2}}
			+  \frac{\grad^ff}{\sqrt{1+(\grad^ff)^2}} \de_t(\phi^2)\right]
		\did\LL^2 ,
\end{align*}
for all $\phi\in C^\infty_c(\omega)$.
Notice that the formal adjoint of $\grad^f$ is $(\grad^f)^*\phi=-\grad^f\phi-\de_tf\,\phi$.
By means of the triangle and the Hölder inequalities, one can easily show the following lemma.
We will denote by $W^{1,p}_0(\omega)$ is the closure of $C^\infty_c(\omega)$ in $W^{1,p}(\omega)$, 
and by $W^{1,p}_c(\omega)$ the subspace of functions in $W^{1,p}(\omega)$ whose essential support is compactly contained in $\omega$.

\begin{lemma}\label{lem669a07cf}
	Let $q>1$.
	A function $f\in W^{1,q}_\loc(\omega)$ with $\omega\subset\R^2$ open, is stationary 
	if and only if $I_f(\phi) = 0$ for all $\phi\in W^{1,q'}_c(\omega)$, 
	where $q' = \frac{q}{q-1}$ is the Hölder conjugate exponent.
\end{lemma}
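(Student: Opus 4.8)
The plan is to prove the two implications separately: the ``if'' direction is immediate, while the ``only if'' direction rests on a continuity-plus-density argument. The ``if'' direction needs no work, since $C^\infty_c(\omega)\subset W^{1,q'}_c(\omega)$: if $I_f(\phi)=0$ for all $\phi\in W^{1,q'}_c(\omega)$, then in particular it vanishes on $C^\infty_c(\omega)$, which is exactly stationarity.

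For the converse, suppose $f$ is stationary and fix $\phi\in W^{1,q'}_c(\omega)$ with essential support $K_0\Subset\omega$. The key observation is that $I_f$ extends to a bounded linear functional on $W^{1,q'}$ functions with support in a fixed compact set. Writing
\[
I_f(\phi) = -\int_{\omega} c_f\,(\grad^f\phi+\de_\tau f\,\phi)\did\LL^2, \qquad c_f := \frac{\grad^ff}{\sqrt{1+(\grad^ff)^2}},
\]
the coefficient satisfies $|c_f|\le 1$ pointwise, and since $\grad^f\phi=\de_\eta\phi+f\,\de_\tau\phi$, the triangle inequality gives
\[
|I_f(\phi)| \le \int_{K}\bigl(|\de_\eta\phi| + |f|\,|\de_\tau\phi| + |\de_\tau f|\,|\phi|\bigr)\did\LL^2
\]
for any compact $K\supset\spt\phi$. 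Applying Hölder's inequality with exponents $q$ and $q'$ to each term bounds them respectively by $|K|^{1/q}\|\de_\eta\phi\|_{L^{q'}(K)}$, by $\|f\|_{L^q(K)}\|\de_\tau\phi\|_{L^{q'}(K)}$, and by $\|\de_\tau f\|_{L^q(K)}\|\phi\|_{L^{q'}(K)}$. Since $f\in W^{1,q}_\loc(\omega)$, the $L^q(K)$ norms of $f$ and $\de_\tau f$ are finite, and we obtain
\[
|I_f(\phi)| \le C_K\,\|\phi\|_{W^{1,q'}(K)},
\]
with $C_K$ depending only on $|K|$ and $\|f\|_{W^{1,q}(K)}$, but not on $\phi$.

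With this estimate, I would close the argument by approximation. Choose $K_0\Subset K\Subset\omega$ and let $\phi_\epsilon=\phi*\rho_\epsilon$ be standard mollifications; for $\epsilon$ small enough $\spt\phi_\epsilon\subset K$, so $\phi_\epsilon\in C^\infty_c(\omega)$, and since $q'<\infty$ (because $q>1$) we have $\phi_\epsilon\to\phi$ in $W^{1,q'}$. Stationarity gives $I_f(\phi_\epsilon)=0$ for every such $\epsilon$, so the displayed bound yields
\[
|I_f(\phi)| = |I_f(\phi-\phi_\epsilon)| \le C_K\,\|\phi-\phi_\epsilon\|_{W^{1,q'}(K)} \xrightarrow{\ \epsilon\to0\ } 0,
\]
whence $I_f(\phi)=0$, as desired.

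The only point requiring care is the uniformity of the support: one must keep all approximants $\phi_\epsilon$ inside a single compact set $K\Subset\omega$, so that the constant $C_K$ remains fixed along the approximation. This is guaranteed by the compactness of the essential support of $\phi$ together with the choice of a fixed intermediate compact set $K$ with $K_0\Subset K\Subset\omega$. Everything else is a direct application of Hölder's inequality and of the elementary bound $|c_f|\le 1$, which is exactly why the statement can be proved ``easily'' as claimed.
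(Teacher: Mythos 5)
Your proposal is correct and follows essentially the same route as the paper: the pointwise bound $\left|\frac{s}{\sqrt{1+s^2}}\right|\le 1$ plus H\"older's inequality yields the estimate $|I_f(\phi)|\le C_K\|\phi\|_{W^{1,q'}(K)}$ on a fixed compact set containing the essential support, and then mollification together with linearity of $I_f$ closes the argument. Your explicit attention to keeping all approximants inside one fixed compact set is exactly the point the paper handles by choosing the intermediate open set $\omega'$ with $\spt_e(\phi)\Subset\omega'\Subset\omega$.
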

\begin{proof}
	Notice that, using the fact that $\left|\frac{s}{\sqrt{1+s^2}}\right|\le 1$ for all $s\in\R$,
	and applying the Hölder inequality, there is $C>0$ such that, for all $\omega'\Subset\omega$ and all $\phi\in W^{1,q'}(\omega)$,
	\begin{equation}\label{eq688a4156}
	\left| \int_{\omega'} \frac{\grad^f f}{\sqrt{1+(\grad^f f)^2}} (\grad^f \phi + (\de_t f) \phi ) \did \mathscr{L}^2 \right|
	\le C (1+\|f\|_{W^{1,q}(\omega')}) \cdot \|\phi\|_{W^{1,q'}(\omega')} ,
	\end{equation}
	where $1/q + 1/q' = 1$.
	
	If $\phi\in W^{1,q'}_c(\omega)$, then there is $\omega'\Subset\omega$ open with $\spt_e(\phi)\Subset\omega'$, where $\spt_e$ denotes the essential support.
	Therefore, using standard mollifiers, one can show that there exists a sequence $\{\phi_k\}_{k\in\N}\subset C^\infty_c(\omega')$ converging to $\phi$ in $W^{1,q'}_c(\omega')$.
	From~\eqref{eq688a4156}, we obtain that
	$\lim_{k\to\infty} I_f(\phi_k-\phi) = 0$, and so $I_f(\phi)=0$.
\end{proof}
	
\begin{lemma}\label{lem66995925}
	Suppose that $f\in W^{1,q}_\loc(\omega)$ for $q>2$ and that $II_f(\phi)\ge0$ for all  $\phi\in C^\infty_c(\omega)$.
	Then $II_f(\phi)\ge0$ for all $\phi\in W^{1,\beta}_c(\omega)$, with $\beta = \frac{2q}{q-2} \in (2,\infty)$.
\end{lemma}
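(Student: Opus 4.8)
The plan is to run exactly the same scheme as in Lemma~\ref{lem669a07cf}, upgrading it from the linear functional $I_f$ to the quadratic functional $II_f$ via polarization. First I would note that since $q>2$ we have $f\in W^{1,q}_\loc(\omega)\subset C^0(\omega)$, so $\grad^ff$ is defined a.e.\ and the displayed formula for $II_f$ from~\cite[Remark 3.9]{MR2455341} applies. I would then write $II_f(\phi)=B(\phi,\phi)$, where $B$ is the symmetric bilinear form
\[
B(\phi,\psi) = \int_{\omega} \left[ \frac{(\grad^f\phi+\de_\tau f\,\phi)(\grad^f\psi+\de_\tau f\,\psi)}{\left(1+(\grad^ff)^2\right)^{3/2}} + \frac{\grad^ff}{\sqrt{1+(\grad^ff)^2}}\,\de_\tau(\phi\psi) \right] \did\LL^2 .
\]
The goal is to show that $B$ extends to a bounded bilinear form on $W^{1,\beta}_c(\omega)$ (uniformly on functions supported in a fixed $\omega'\Subset\omega$), and then to pass from $C^\infty_c$ to $W^{1,\beta}_c$ by density.

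The key estimate is a bilinear bound $|B(\phi,\psi)|\le C(f,\omega')\,\|\phi\|_{W^{1,\beta}(\omega')}\|\psi\|_{W^{1,\beta}(\omega')}$ for $\phi,\psi$ supported in $\omega'$. Since $\left(1+(\grad^ff)^2\right)^{3/2}\ge 1$ and $\bigl|\grad^ff/\sqrt{1+(\grad^ff)^2}\bigr|\le 1$, the two algebraic weights are harmless, so by Cauchy--Schwarz the first integral is bounded by $\|\grad^f\phi+\de_\tau f\,\phi\|_{L^2(\omega')}\,\|\grad^f\psi+\de_\tau f\,\psi\|_{L^2(\omega')}$. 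I would then expand $\grad^f\phi+\de_\tau f\,\phi = \de_\eta\phi + f\de_\tau\phi + \de_\tau f\,\phi$ and bound each summand in $L^2(\omega')$: the term $\de_\eta\phi$ lies in $L^\beta(\omega')\subset L^2(\omega')$ because $\beta>2$ and $\omega'$ is bounded; and the two products $f\de_\tau\phi$ and $\de_\tau f\,\phi$ are controlled by Hölder's inequality using $f,\de_\tau f\in L^q(\omega')$ against $\de_\tau\phi,\phi\in L^\beta(\omega')$. This is exactly the point of the value $\beta=\frac{2q}{q-2}$: one checks $\frac1q+\frac1\beta=\frac12$, so both products are square-integrable with norm dominated by $\|f\|_{W^{1,q}(\omega')}\|\phi\|_{W^{1,\beta}(\omega')}$. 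For the second integral, Leibniz gives $\de_\tau(\phi\psi)=\phi\,\de_\tau\psi+\psi\,\de_\tau\phi$, and each product is estimated by Hölder as an element of $L^{\beta/2}(\omega')\subset L^1(\omega')$ (again using $\beta>2$), yielding a bilinear bound by $\|\phi\|_{W^{1,\beta}(\omega')}\|\psi\|_{W^{1,\beta}(\omega')}$. Collecting the pieces proves the boundedness of $B$.

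Finally, for the density step: given $\phi\in W^{1,\beta}_c(\omega)$ with $\spt_e(\phi)\Subset\omega'\Subset\omega$, standard mollification produces $\phi_k\in C^\infty_c(\omega')$ with $\phi_k\to\phi$ in $W^{1,\beta}$. Writing $II_f(\phi)-II_f(\phi_k)=B(\phi-\phi_k,\phi)+B(\phi_k,\phi-\phi_k)$ and invoking the bilinear bound shows $II_f(\phi_k)\to II_f(\phi)$; since $II_f(\phi_k)\ge 0$ by hypothesis, we conclude $II_f(\phi)\ge 0$. The only genuinely substantive point—the main (and essentially the sole) obstacle—is the exponent bookkeeping in the second paragraph, namely recognizing that the critical Hölder pairing is $\frac1q+\frac1\beta=\frac12$, which forces the choice $\beta=\frac{2q}{q-2}$ and guarantees that $f\de_\tau\phi$ and $\de_\tau f\,\phi$ land in $L^2$; everything else is routine once $B$ is shown to be continuous.
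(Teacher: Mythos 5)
Your proposal is correct and follows essentially the same route as the paper: the paper's proof is exactly your bilinear bound in disguise, since it estimates $|II_f(\phi)-II_f(\psi)|$ by factoring the differences of squares as (sum)$\times$(difference) — your polarization — and its key inequality $\|AB\|_{L^2}\le\|A\|_{L^q}\|B\|_{L^\beta}$ is precisely your pairing $\tfrac1q+\tfrac1\beta=\tfrac12$, followed by the same mollification/density argument. No substantive difference.
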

\begin{proof}
	First of all, we remark that, since $q/2>1$ then $\beta = 2(q/2)' = \frac{2q}{q-2} \in (2,\infty)$.
	Moreover, if $A,B$ are two non-negative measurable functions on $\omega$, then
	\begin{equation}\label{eq67f2415e}
	\begin{aligned}
		\|AB\|_{L^2(\omega)} 
		&= \left(\int_\omega A^2 B^2 \did x\right)^{1/2} \\
		&\le \left[\left(\int_\omega A^{2\frac{q}{2}} \did x \right)^{\frac{2}{q}}  \left(\int_\omega B^{2\frac{q}{q-2}} \did x\right)^{\frac{q-2}{q}} \right]^{1/2}
		= \|A\|_{L^q(\omega)} \|B\|_{L^\beta(\omega)} .
	\end{aligned}
	\end{equation}
	
	Notice that, for $\phi,\psi\in C^\infty_c(\omega)$,
	\begin{align*}
	 	\de_\tau\phi^2 - \de_\tau\psi^2
		&= 2(\phi\de_\tau\phi - \psi\de_\tau\psi) \\
		&= (\phi-\psi) \de_\tau (\phi+\psi) + (\phi+\psi) \de_\tau(\phi-\psi) .
	\end{align*}	
	
	Moreover, if $\phi,\psi\in C^\infty_c(\omega)$ are such that $\spt(\phi)\cup\spt(\psi)\subset K\Subset\omega$, then
	\begin{align*}
	&\left| II_f(\phi)-II_f(\psi) \right| \\
	&\le \int_\omega
		\left( \left| (\grad^f\phi+\de_\tau f \phi)^2 - (\grad^f\psi+\de_\tau f \psi)^2 \right|
			+ \left| \de_\tau (\phi^2) - \de_\tau (\psi^2) \right| \right)
		\did\LL^2 \\
	&\le \|(\grad^f\phi+\de_\tau f \phi)-(\grad^f\psi+\de_\tau f \psi)\|_{L^2}
		\cdot \|(\grad^f\phi+\de_\tau f \phi)+(\grad^f\psi+\de_\tau f \psi)\|_{L^2} \\
	&\qquad	+ \|\de_t(\phi-\psi) \|_{L^2} \|(\phi+\psi) \|_{L^2} + \|(\phi-\psi) \|_{L^2} \|\de_\tau (\phi+\psi) \|_{L^2} \\
	&\le ( \|\de_\eta\phi-\de_\eta\psi\|_{L^2} + \|f(\de_\tau\phi - \de_\tau\psi)\|_{L^2} + \|\de_\tau f (\phi-\psi)\|_{L^2} ) \cdot \\
	&\qquad\cdot ( \|\de_\eta\phi\|_{L^2} + \|f\de_\tau\phi\|_{L^2} + \| \de_\tau f \phi\|_{L^2} 
		+ \|\de_\eta\psi\|_{L^2} + \|f\de_\tau\psi\|_{L^2} + \| \de_\tau f \psi\|_{L^2}  ) \\
	&\qquad	+ \|\de_t(\phi-\psi) \|_{L^2} \|(\phi+\psi) \|_{L^2} + \|(\phi-\psi) \|_{L^2} \|\de_\tau (\phi+\psi) \|_{L^2} \\
	&\overset{\eqref{eq67f2415e}}\le ( |K|^{1/q} \|\de_\eta\phi-\de_\eta\psi\|_{L^\beta} 
		+ \|f\|_{L^q} \|\de_\tau\phi - \de_\tau\psi\|_{L^\beta}
		+  \|\de_\tau f\|_{L^q} \| (\phi-\psi)\|_{L^\beta} ) \cdot \\
		&\qquad\cdot ( |K|^{1/q} \|\de_\eta\phi\|_{L^2} 
		+  \|f\|_{L^q} \|\de_\tau\phi\|_{L^\beta} 
		+ \|\de_\tau f\|_{L^q} \| \phi\|_{L^\beta}   \\
		&\qquad\qquad
		+ |K|^{1/q} \|\de_\eta\psi\|_{L^\beta} 
		+ \|f\|_{L^q} \|\de_\tau\psi\|_{L^\beta} 
		+ \|\de_\tau f\|_{L^q} \|  \psi\|_{L^\beta}  ) \\
	&\qquad	+ |K|^{2/q} \|\de_t(\phi-\psi) \|_{L^\beta} \|(\phi+\psi) \|_{L^\beta}
		+ |K|^{2/q} \|(\phi-\psi) \|_{L^\beta} \|\de_\tau (\phi+\psi) \|_{L^\beta} \\
	&\le C ( |K|^{2/q} + \|f\|_{W^{1,q}(K)} + \|\phi\|_{W^{1,\beta}(K)} + \|\psi\|_{W^{1,\beta}(K)} )^2 \cdot \|\phi-\psi\|_{W^{1,\beta}(K)} ,
	\end{align*}
	for a constant $C$ depending on $q$.
	Then, arguing as in the proof of Lemma~\ref{lem669a07cf}, the thesis follows.
\end{proof}

\section{Bi-Sobolev Lagrangian homeomorphism}\label{sec68009c34}

\subsection{Continuity of Sobolev composition}

\begin{definition}[Homeomorphism of finite distortion]
	Given a map $\Psi\in W^{1,1}_\loc(\tilde\omega;\omega)$ between open sets $\tilde\omega,\omega\subset\R^2$
	 and $q>0$, define
	\[
	K^\Psi_q := \frac{|D\Psi|^q}{J_\Psi} ,
	\]
	where $|D\Psi(x)|$ and $J_\Psi(x)$ are the operator norm and the determinant of the matrix $D\Psi(x)$, respectively.
	
	A homeomorphism $\Psi:\tilde\omega\to\omega$ between open subsets of $\R^2$ 
	is \emph{of finite distortion}
	if $\Psi\in W^{1,1}_\loc(\tilde\omega;\omega)$
	and if $K^\Psi_1(x)$ is well defined and finite for almost every $x\in\tilde\omega$.
\end{definition}

\begin{lemma}[{\cite[Lemma 2.7]{MR3184742}}]\label{lem66964d4b}
	Let $\Psi:\tilde\omega\to\omega$ be an homeomorphism between open subsets of $\R^2$, $q>0$ and $p>2$.
	Suppose that $\Psi\in W^{1,p}_\loc(\tilde\omega;\omega)$, $\Psi^{-1}\in W^{1,p}_\loc(\omega,\tilde\omega)$ and $J_\Psi(x)>0$ for a.e.~$x\in\tilde\omega$.
	Then $\Psi$ is a homeomorphism of finite distortion and 
	\begin{equation}
		K^\Psi_q \in L^r_\loc(\tilde\omega)
	\end{equation}
	with $r = \left( \frac{q}{p} + \frac{2}{p-2} \right)^{-1}$, which may be smaller than 1.
\end{lemma}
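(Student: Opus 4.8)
The plan is to reduce everything to two local integrability facts combined through a single application of Hölder's inequality. Since $p>2$ and the dimension is $2$, both $\Psi$ and $\Psi^{-1}$ are Sobolev homeomorphisms that are differentiable almost everywhere, satisfy Lusin's condition (N), and for which the change-of-variables formula $\int_{\tilde\omega'} g(x)J_\Psi(x)\did\LL^2(x) = \int_{\Psi(\tilde\omega')} g(\Psi^{-1}(y))\did\LL^2(y)$ holds for every Borel $g\ge 0$ and every $\tilde\omega'\Subset\tilde\omega$. In particular $|D\Psi|\in L^p_\loc(\tilde\omega)$ and, since $J_\Psi>0$ a.e.\ while $|D\Psi|<\infty$ a.e., the distortion $K^\Psi_1 = |D\Psi|/J_\Psi$ is finite a.e., so $\Psi$ is a homeomorphism of finite distortion. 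I will also use the pointwise identity $J_{\Psi^{-1}}(y) = J_\Psi(\Psi^{-1}(y))^{-1}$, valid for a.e.\ $y$, which follows from $D\Psi^{-1}(y) = \bigl(D\Psi(\Psi^{-1}(y))\bigr)^{-1}$ a.e.

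First I would prove that the inverse Jacobian is locally integrable to a suitable power, namely $J_\Psi^{-1}\in L^{(p-2)/2}_\loc(\tilde\omega)$. Fix $\tilde\omega'\Subset\tilde\omega$ and set $m=(p-2)/2$. Applying the change-of-variables formula to $g=J_\Psi^{-m-1}$ and then the identity above gives
\[
\int_{\tilde\omega'} J_\Psi^{-m}\did\LL^2 = \int_{\Psi(\tilde\omega')} J_\Psi(\Psi^{-1}(y))^{-m-1}\did\LL^2(y) = \int_{\Psi(\tilde\omega')} J_{\Psi^{-1}}^{\,m+1}\did\LL^2 .
\]
Since in the plane $J_{\Psi^{-1}}\le |D\Psi^{-1}|^2$ and $2(m+1)=p$, the last integral is bounded by $\int_{\Psi(\tilde\omega')}|D\Psi^{-1}|^p\did\LL^2$, which is finite because $\Psi(\tilde\omega')\Subset\omega$ and $\Psi^{-1}\in W^{1,p}_\loc(\omega)$.

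With the two ingredients $|D\Psi|\in L^p_\loc$ and $J_\Psi^{-1}\in L^{(p-2)/2}_\loc$, the conclusion follows by Hölder. Writing $(K^\Psi_q)^r = |D\Psi|^{qr} J_\Psi^{-r}$ and applying Hölder on $\tilde\omega'$ with the conjugate exponents $\tfrac{p}{qr}$ and $\tfrac{p}{p-qr}$ (both admissible once one checks $qr<p$, which holds for the stated $r$), I get
\[
\int_{\tilde\omega'} (K^\Psi_q)^r\did\LL^2 \le \Bigl(\int_{\tilde\omega'}|D\Psi|^p\did\LL^2\Bigr)^{qr/p} \Bigl(\int_{\tilde\omega'} J_\Psi^{-rp/(p-qr)}\did\LL^2\Bigr)^{(p-qr)/p} .
\]
The exponent of $J_\Psi^{-1}$ in the second factor is exactly $\tfrac{rp}{p-qr}$, and a direct computation shows that the choice $\tfrac1r = \tfrac qp + \tfrac{2}{p-2}$ is precisely the one making $\tfrac{rp}{p-qr} = \tfrac{p-2}{2}$; the second factor is then finite by the previous paragraph, which proves $K^\Psi_q\in L^r_\loc(\tilde\omega)$.

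The routine parts are the two Hölder estimates and the algebraic identity pinning down $r$. The main obstacle, and the place where the hypotheses $p>2$ and $\Psi^{-1}\in W^{1,p}_\loc$ are genuinely used, is the first paragraph: one must justify that $\Psi$ (and $\Psi^{-1}$) is differentiable a.e., satisfies condition (N) so that the change-of-variables formula is valid, and that $J_\Psi^{-1} = J_{\Psi^{-1}}\circ\Psi$ a.e. These are standard facts for planar Sobolev homeomorphisms with $p>2$, and I would quote them from the theory of mappings of finite distortion rather than reprove them.
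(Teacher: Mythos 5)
Your argument is correct, but note that the paper does not prove this lemma at all: it is imported verbatim as \cite[Lemma 2.7]{MR3184742}, so there is no internal proof to compare against. Your proposal is a sound self-contained reconstruction, and it is essentially the standard one for this statement. The two pillars are exactly right: (i) the identity $\int_{\tilde\omega'} J_\Psi^{-m}\did\LL^2=\int_{\Psi(\tilde\omega')}J_{\Psi^{-1}}^{\,m+1}\did\LL^2$ with $m=\tfrac{p-2}{2}$, combined with Hadamard's bound $J_{\Psi^{-1}}\le|D\Psi^{-1}|^2$ and $2(m+1)=p$, which converts the $W^{1,p}_\loc$ regularity of the inverse into $J_\Psi^{-1}\in L^{(p-2)/2}_\loc(\tilde\omega)$; and (ii) the Hölder split of $|D\Psi|^{qr}J_\Psi^{-r}$ with exponents $\tfrac{p}{qr}$ and $\tfrac{p}{p-qr}$, for which the algebra $\tfrac{rp}{p-qr}=\tfrac{p-2}{2}$ and the admissibility check $0<qr<p$ both work out with the stated $r$. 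The preliminary facts you invoke (a.e.\ differentiability, Lusin (N), the change-of-variables formula, and $J_{\Psi^{-1}}=(J_\Psi\circ\Psi^{-1})^{-1}$ a.e.) are genuinely where $p>2$ in dimension $2$ enters, and it is legitimate to quote them; the only point worth making explicit is that the a.e.\ validity of the chain-rule identity for $\Psi\circ\Psi^{-1}=\mathrm{id}$ uses condition (N) for $\Psi$ to push forward the null sets where $\Psi$ fails to be differentiable or where $J_\Psi=0$. No gap.
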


\begin{lemma}[{\cite[Theorem 5.13]{MR3184742}}]\label{lem66964d4c}
	Let $\tilde\omega,\omega\subset\R^2$ open sets, $1\le s\le q<\infty$,
	and let $\Psi\in W^{1,1}_\loc(\tilde\omega;\omega)$ be a homeomorphism of finite distortion satisfying
	\begin{equation}
		K^\Psi_q \in L^{\frac{s}{q-s}}(\tilde\omega) .
	\end{equation}
	Then the operator $T_\Psi:u\mapsto u\circ\Psi$ is continuous from $W^{1,q}_\loc(\omega)\cap C(\omega)$ into $W^{1,s}_\loc(\tilde\omega)$.
	
	Moreover, we have
	\begin{equation}\label{eq66964e2a}
	D(u\circ\Psi)(x) = Du(\Psi(x)) D\Psi(x)
	\qquad\text{ for a.e.~}x\in\omega,
	\end{equation}
	if we use the convention that $Du(\Psi(x)) \cdot 0 = 0$ even if $Du$ does not exists or it equals infinity at $\Psi(x)$.
\end{lemma}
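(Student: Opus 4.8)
The plan is to establish the pointwise chain rule~\eqref{eq66964e2a} first and then read off both the $W^{1,s}_\loc(\tilde\omega)$ bound and the continuity of $T_\Psi$ from a single integral estimate, obtained by combining the area formula with Hölder's inequality. Throughout I would use the standard structure theory of planar homeomorphisms of finite distortion: since $J_\Psi>0$ a.e., the map $\Psi$ and its inverse satisfy Lusin's conditions (N) and (N$^{-1}$), so null sets are preserved in both directions and the change of variables formula $\int_{\tilde\omega'} g(\Psi)\,J_\Psi\did\LL^2=\int_{\Psi(\tilde\omega')} g\did\LL^2$ holds for every $\tilde\omega'\Subset\tilde\omega$ and every nonnegative measurable $g$.

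The computational core is the following estimate. For a measurable $g:\omega\to\R$ and $\tilde\omega'\Subset\tilde\omega$, I would use the identity $|D\Psi|^s=(K^\Psi_q)^{s/q}J_\Psi^{s/q}$ coming from the definition of $K^\Psi_q$, and then apply Hölder's inequality with the conjugate exponents $q/s$ and $q/(q-s)$ to the product $|g(\Psi)|^sJ_\Psi^{s/q}\cdot(K^\Psi_q)^{s/q}$. After the change of variables this yields
\begin{equation*}
\left(\int_{\tilde\omega'}|g(\Psi)|^s|D\Psi|^s\did\LL^2\right)^{1/s}
\le \|g\|_{L^q(\Psi(\tilde\omega'))}\,\|K^\Psi_q\|_{L^{s/(q-s)}(\tilde\omega')}^{1/q},
\end{equation*}
where on the set $\{D\Psi=0\}$ the integrand is understood to vanish, consistently with the convention in the statement (and where the borderline $s=q$ is covered by reading $L^{s/(q-s)}$ as $L^\infty$). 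The hypothesis $K^\Psi_q\in L^{s/(q-s)}(\tilde\omega)$ makes the right-hand side finite.

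To obtain the chain rule, I would approximate $u\in W^{1,q}_\loc(\omega)\cap C(\omega)$ by functions $u_k\in C^\infty(\omega)$ converging to $u$ both in $W^{1,q}_\loc(\omega)$ and locally uniformly (mollification suffices, using continuity of $u$ for the uniform part). For smooth $u_k$ the classical chain rule for a Lipschitz map composed with a Sobolev map gives $D(u_k\circ\Psi)=\grad u_k(\Psi)\,D\Psi$ a.e.\ on $\tilde\omega$. Applying the core estimate to $g=\grad u_j-\grad u_k$ shows that $\{D(u_k\circ\Psi)\}_k$ is Cauchy in $L^s_\loc(\tilde\omega)$, while $u_k\circ\Psi\to u\circ\Psi$ locally uniformly; hence $u\circ\Psi\in W^{1,s}_\loc(\tilde\omega)$ with $D(u\circ\Psi)$ equal to the $L^s_\loc$ limit of $\grad u_k(\Psi)\,D\Psi$. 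The core estimate applied to $g=\grad u_k-\grad u$ also forces $\grad u_k(\Psi)\,D\Psi\to\grad u(\Psi)\,D\Psi$ in $L^s_\loc$, which identifies the limit and proves~\eqref{eq66964e2a}. Here lies the main obstacle: for $\grad u(\Psi(x))$ to be defined for a.e.\ $x$ with $D\Psi(x)\neq0$, one must know that $\Psi$ carries almost no mass into the Lebesgue-null set where $\grad u$ fails to exist, i.e.\ condition (N$^{-1}$); the convention $Du(\Psi)\cdot 0=0$ then disposes of the remaining set $\{D\Psi=0\}$.

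Finally, continuity of $T_\Psi$ follows from linearity together with the core estimate applied to $g=\grad u-\grad v$: for every $\tilde\omega'\Subset\tilde\omega$ one gets the bound $\|D(u\circ\Psi)-D(v\circ\Psi)\|_{L^s(\tilde\omega')}\le\|\grad u-\grad v\|_{L^q(\Psi(\tilde\omega'))}\,\|K^\Psi_q\|_{L^{s/(q-s)}(\tilde\omega')}^{1/q}$, so the first-order part of $u\circ\Psi$ depends continuously on $u$. The zeroth-order part is controlled because $u\mapsto u\circ\Psi$ preserves local uniform convergence, hence $L^s_\loc$ convergence. Combining the two gives continuity of $T_\Psi$ from $W^{1,q}_\loc(\omega)\cap C(\omega)$ into $W^{1,s}_\loc(\tilde\omega)$.
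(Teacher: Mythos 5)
The paper does not actually prove this lemma: it is quoted verbatim from Hencl--Koskela \cite[Theorem 5.13]{MR3184742}, so there is no internal proof to compare against. Your argument is, in substance, the standard proof of that theorem: the identity $|D\Psi|^s=(K^\Psi_q)^{s/q}J_\Psi^{s/q}$, Hölder with exponents $q/s$ and $q/(q-s)$, the area formula to convert $\int |g(\Psi)|^qJ_\Psi$ into an integral over $\Psi(\tilde\omega')$, and then smooth approximation of $u$, with finite distortion guaranteeing that $D\Psi=0$ a.e.\ on the preimage of the null set where $\grad u$ is undefined. The structure is sound and the exponent bookkeeping (the power $1/q$ on the distortion norm, the borderline case $s=q$) is right.

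One justification is wrong and should be repaired, although the repair does not change the architecture. You assert that ``since $J_\Psi>0$ a.e., the map $\Psi$ and its inverse satisfy Lusin's conditions (N) and (N$^{-1}$)'' and you then use the change of variables as an equality. First, $J_\Psi>0$ a.e.\ is not among the hypotheses of this lemma (it enters only in Lemma~\ref{lem66964d4b} and Proposition~\ref{prop66964d64}); a homeomorphism of finite distortion may well have $J_\Psi=0$ on a set of positive measure. Second, even when $J_\Psi>0$ a.e., a planar Sobolev homeomorphism need not satisfy condition (N) (Ponomarev-type examples map a null Cantor set onto a set of positive measure), so the equality $\int_{\tilde\omega'}g(\Psi)J_\Psi\did\LL^2=\int_{\Psi(\tilde\omega')}g\did\LL^2$ is not available. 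What your argument actually needs is only the one-sided area formula
\begin{equation*}
\int_{\tilde\omega'}g(\Psi(x))\,J_\Psi(x)\did\LL^2(x)\;\le\;\int_{\Psi(\tilde\omega')}g(y)\did\LL^2(y),
\end{equation*}
valid for every $W^{1,1}_\loc$ homeomorphism with $J_\Psi\ge0$ and every nonnegative measurable $g$, with no Lusin condition whatsoever. This inequality suffices both for your core estimate (you only ever bound from above) and for the measurability point at the end: applying it to $g=\one_E$ with $|E|=0$ gives $J_\Psi=0$ a.e.\ on $\Psi^{-1}(E)$, hence $D\Psi=0$ a.e.\ there by finite distortion, which is exactly what the convention $Du(\Psi(x))\cdot0=0$ is designed to absorb. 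With that substitution your proof is complete and coincides with the one in the cited source.
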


\begin{proposition}[Continuity of composition]\label{prop66964d64}
	Fix $p>2$ and consider $q,s$ such that:
	\begin{enumerate}
	\item
	$1\le s<p$ and $q= \frac{p^2 s}{(p-s)(p-2)}$, or, equivalently,
	\item
	$\frac{p^2}{(p-1)(p-2)} \le q < \infty$ and $s=\frac{ pq(p-2)}{p^2+q(p-2)}$ .
	\end{enumerate}
	
	Let $\Psi:\tilde\omega\to\omega$ be a homeomorphism between open subsets of $\R^2$.
	Suppose that $\Psi\in W^{1,p}_\loc(\tilde\omega;\omega)$, $\Psi^{-1}\in W^{1,p}_\loc(\omega,\tilde\omega)$
	and $J_\Psi(x)>0$ for a.e.~$x\in\tilde\omega$.
	Then the operator $T_\Psi$ defined in Lemma~\ref{lem66964d4c} is continuous from $W^{1,q}_\loc(\omega)\cap C(\omega)$ into $W^{1,s}_\loc(\tilde\omega)\cap C(\tilde\omega)$, 
	and~\eqref{eq66964e2a} holds.
\end{proposition}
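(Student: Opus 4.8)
The plan is to combine the two imported black-box lemmas and then upgrade the target space by a soft continuity argument. I would first dispatch the elementary algebra showing that the descriptions (1) and (2) single out the same pairs $(q,s)$: solving $q=\frac{p^2 s}{(p-s)(p-2)}$ for $s$ gives exactly $s=\frac{pq(p-2)}{p^2+q(p-2)}$, and since $q=\frac{p^2}{p-2}\cdot\frac{s}{p-s}$ is strictly increasing in $s$ on $[1,p)$, the range $s\in[1,p)$ corresponds bijectively to $q\in[\frac{p^2}{(p-1)(p-2)},\infty)$. The same identity $\frac qs=\frac{p^2}{(p-s)(p-2)}>1$ records that $1\le s<q<\infty$, so that the pair $(s,q)$ is admissible in Lemma~\ref{lem66964d4c}.

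The heart of the matter is matching the integrability exponents. Applying Lemma~\ref{lem66964d4b} with the exponent $q$ (the hypotheses $\Psi,\Psi^{-1}\in W^{1,p}_\loc$ and $J_\Psi>0$ are exactly those of the proposition) yields $K^\Psi_q\in L^r_\loc(\tilde\omega)$ with $\frac1r=\frac qp+\frac2{p-2}$, whereas Lemma~\ref{lem66964d4c}, applied with the pair $(s,q)$, requires precisely $K^\Psi_q\in L^{s/(q-s)}$. Using the relation in (1), a direct computation gives
\[
\frac qp+\frac2{p-2}=\frac{ps+2(p-s)}{(p-s)(p-2)}=\frac{p^2-(p-s)(p-2)}{(p-s)(p-2)}=\frac qs-1=\frac{q-s}{s},
\]
so $r=\frac{s}{q-s}$ exactly and hence $K^\Psi_q\in L^{s/(q-s)}_\loc(\tilde\omega)$: the distortion loss in Lemma~\ref{lem66964d4b} is compensated exactly at the borderline.

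Since Lemma~\ref{lem66964d4c} is stated with the global hypothesis $K^\Psi_q\in L^{s/(q-s)}(\tilde\omega)$ while the previous step only provides local integrability, I would localize: for every $\tilde\omega'\Subset\tilde\omega$ the restriction of $\Psi$ is again a homeomorphism of finite distortion onto the open set $\Psi(\tilde\omega')$ with $K^\Psi_q\in L^{s/(q-s)}(\tilde\omega')$, so Lemma~\ref{lem66964d4c} gives continuity of $T_\Psi$ into $W^{1,s}(\tilde\omega')$ together with the chain rule~\eqref{eq66964e2a}; exhausting $\tilde\omega$ by such $\tilde\omega'$ yields continuity into $W^{1,s}_\loc(\tilde\omega)$ and the a.e.\ validity of~\eqref{eq66964e2a}. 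Finally, because $\Psi$ is a homeomorphism, $u\circ\Psi$ is continuous whenever $u$ is, and if $u_k\to u$ locally uniformly on $\omega$, then for every compact $K'\subset\tilde\omega$ the set $\Psi(K')$ is compact in $\omega$ and $\sup_{K'}|u_k\circ\Psi-u\circ\Psi|=\sup_{\Psi(K')}|u_k-u|\to0$; thus $T_\Psi$ is continuous into $C(\tilde\omega)$ as well.

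I do not expect a deep obstacle here: the proposition is essentially a calibration of the two cited lemmas. The only points requiring care are the exponent bookkeeping—checking that the distortion loss is compensated exactly, so that the borderline hypothesis of Lemma~\ref{lem66964d4c} is met with equality—and the routine passage from the global integrability statement to its local form through an exhaustion of $\tilde\omega$.
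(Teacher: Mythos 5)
Your proof is correct and follows essentially the same route as the paper: the paper's own argument consists precisely of noting that the two parametrizations of $(q,s)$ agree, that $q>s$, and that the exponent $r=\bigl(\tfrac qp+\tfrac 2{p-2}\bigr)^{-1}$ from Lemma~\ref{lem66964d4b} equals $\tfrac{s}{q-s}$ from Lemma~\ref{lem66964d4c}, and then applying the two lemmas in sequence. You merely spell out the exponent computation and the local-to-global bookkeeping in more detail than the paper does.
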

\begin{proof}
	 The choices of $q$ and $s$ imply $q>s$,
	 and that the quantity $r = \left( \frac{q}{p} + \frac{2}{p-2} \right)^{-1}$
	 from Lemma~\ref{lem66964d4b}
	 equals the exponent $\frac{s}{q-s}$ from Lemma~\ref{lem66964d4c}.
	 Therefore, this proposition is just application in sequence of Lemma~\ref{lem66964d4b} and Lemma~\ref{lem66964d4c}.
\end{proof}

\subsection{Sobolev change of variables}

\begin{definition}[Lusin (N) condition]
	A function $\Psi:\tilde\omega\to\R^2$ satisfies the \emph{Lusin (N) condition} if 
	it maps null sets to null sets, i.e., if
	\[
	\forall E\subset\tilde\omega:
	\ |E| = 0 \THEN |\Psi(E)| = 0 .
	\]
\end{definition}

\begin{proposition}[Change of variables {\cite[Theorem A.35]{MR3184742}}]\label{prop66977f11}
	Let $\Psi:\tilde\omega\to\omega$ be a homeomorphism between open subsets of $\R^2$.
	Suppose that $\Psi\in W^{1,1}_\loc(\tilde\omega;\omega)$ and the $\Psi$ satisfies that Lusin (N) condition.
	For every non-negative or summable Borel function $u:\omega\to\R$
	\[
	\int_{\tilde\omega} u\circ\Psi(x) \cdot J_\Psi(x) \did x
	= \int_\omega u(y) \did y .
	\]
\end{proposition}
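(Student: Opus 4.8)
The plan is to deduce the identity from the \emph{area formula} for Sobolev maps that are approximately differentiable almost everywhere and satisfy the Lusin (N) condition. Concretely, the form I want to use asserts that for $\Psi\in W^{1,1}_\loc(\tilde\omega;\R^2)$ satisfying Lusin (N) and every non-negative Borel $g:\tilde\omega\to\R$,
\[
\int_{\tilde\omega} g(x)\,|J_\Psi(x)| \did x
= \int_{\R^2} \Big( \sum_{x\in\Psi^{-1}(y)} g(x) \Big) \did y .
\]
Granting this, the proposition follows quickly. Since $\Psi$ is a homeomorphism it is injective, so for $y\in\omega=\Psi(\tilde\omega)$ the fibre $\Psi^{-1}(y)$ is a single point, and it is empty for $y\notin\omega$; hence the inner sum equals $g(\Psi^{-1}(y))\,\one_\omega(y)$. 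Choosing $g=u\circ\Psi$ for a non-negative Borel $u$, the inner sum becomes $u(y)\,\one_\omega(y)$, and the area formula reduces to $\int_{\tilde\omega}u(\Psi(x))\,|J_\Psi(x)|\did x=\int_\omega u(y)\did y$. Because $\Psi$ is a planar homeomorphism in $W^{1,1}_\loc$, its Jacobian $J_\Psi$ has almost everywhere constant sign; in the orientation-preserving case $|J_\Psi|=J_\Psi$ almost everywhere, giving the stated identity. The passage from non-negative $u$ to summable $u$ is then routine: decompose $u=u^+-u^-$, apply the non-negative case to each part (both integrals finite since $\int_\omega|u|<\infty$), and subtract.

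So the real content is the area formula itself, and this is where I would spend the effort. First I would use that $\Psi\in W^{1,1}_\loc$ is approximately differentiable almost everywhere and invoke a Lipschitz/Whitney decomposition: there is a null set $N_0\subset\tilde\omega$ and a countable family of measurable sets $E_k$ with $\tilde\omega=N_0\cup\bigcup_k E_k$ such that each restriction $\Psi|_{E_k}$ agrees with (the restriction of) a Lipschitz map $\Psi_k:\R^2\to\R^2$, and moreover the approximate differential $D\Psi$ agrees with $D\Psi_k$ almost everywhere on $E_k$. After disjointifying the $E_k$, I would apply the classical Federer area formula to each Lipschitz $\Psi_k$ on $E_k$, and then use the almost-everywhere coincidence of $\Psi$ and $J_\Psi$ with $\Psi_k$ and $J_{\Psi_k}$ on $E_k$ to rewrite each piece in terms of $\Psi$ itself. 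Summing over $k$ and applying monotone convergence assembles the identity over $\bigcup_k E_k$.

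The step that genuinely requires the hypotheses is the disposal of the exceptional null set $N_0$. On the left-hand side its contribution vanishes automatically, since $\int_{N_0} g\,|J_\Psi|\did x=0$. On the right-hand side one must instead ensure that $N_0$ contributes nothing to the fibrewise sum, i.e.\ that $|\Psi(N_0)|=0$; this is precisely the Lusin (N) condition, and it is exactly the point where that hypothesis cannot be dropped, for otherwise the image of a null set may carry positive measure and the equality fails. I expect this interplay --- the Lipschitz decomposition together with the use of Lusin (N) to annihilate the bad set --- to be the main obstacle, the remaining bookkeeping (injectivity yielding multiplicity one, the constant sign of the planar Jacobian, and the reduction to summable $u$) being comparatively routine. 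As the statement is classical, in practice I would record the above reduction and cite \cite[Theorem A.35]{MR3184742} for the full details.
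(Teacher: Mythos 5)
Your proposal is correct and follows essentially the same route as the paper, which offers no argument for this proposition beyond the citation of \cite[Theorem A.35]{MR3184742}: the proof there is exactly the one you sketch (approximate differentiability of $W^{1,1}_\loc$ maps, the Federer--Whitney decomposition into countably many pieces on which the map is Lipschitz, the classical area formula on each piece, injectivity giving multiplicity one, and the Lusin (N) condition to annihilate the image of the exceptional null set). The one point worth flagging is that the general area formula yields $|J_\Psi|$ while the statement is written with $J_\Psi$; your appeal to the a.e.\ constant sign of the Jacobian of a planar $W^{1,1}_\loc$ homeomorphism settles this only in the orientation-preserving case, which is however the only case used in the paper (for the Lagrangian homeomorphisms one has $J_\Psi=\de_\zeta\chi>0$ a.e.), so this is an imprecision of the statement as transcribed rather than a gap in your argument.
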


\subsection{Bi-Sobolev Lagrangian homeomorphism}
In Definition~\ref{def67f25313} below, we use the following notation.
For $\omega\subset\R^2$
and $r\in\R$, we denote by $\omega_{1,r}$ and $\omega_{2,r}$ the two subsets of $\R$ defined by
\[
	\begin{aligned}
	\omega_{1,r} &:= \{ \tau \in \R : (r,\tau)\in \omega \}, \\
	\omega_{2,r} &:= \{ \eta \in \R : (\eta,r)\in \omega \}.
	\end{aligned}
\]

\begin{definition}[Bi-Sobolev Lagrangian homeomorphism]\label{def67f25313}
	Let $\tilde\omega, \omega \subset \R^2$ be open sets, and $f:\omega\to\R$.
	We say that $\Psi: \tilde\omega \to \omega$ is a \emph{$f$-Lagrangian homeomorphism} if:
		\begin{enumerate}[label=(\roman*)]
		\item 
			$\Psi$ is a homeomorphism;
		\item 
			$\Psi$ has the form  $\Psi (t, \zeta)= (t, \chi (t, \zeta)) $ with $\chi$ continuous and $\chi(t,\cdot):\omega_{1,t}\to\R$ non-decreasing for every $t$;
		\item 
			For every $\zeta \in \R$, the map $\Psi(\cdot, \zeta):\omega_{2,\zeta}\to\R^2$ is locally absolutely continuous and satisfies
			$\de_t \Psi(t,\zeta) = \grad^f (\Psi (t,\zeta))$, or equivalently,
		\[
		\de_t \chi(t,\zeta) = f(t, \chi (t,\zeta)) ,
		\]
		for a.e.~$t$.
		\end{enumerate}
	We say that a $f$-Lagrangian homeomorphism $\Psi$ is \emph{$p$-bi-Sobolev} for $ p\ge 1$ if both $\Psi \in W_\loc^{1, p} (\tilde\omega)$ and $\Psi^{-1} \in W_\loc^{1, p} (\omega)$.
\end{definition}

Notice that if $\Psi$ is a $p$-bi-Sobolev Lagrangian homeomorphism with $p>2$, then $\Psi$ is differentiable almost everywhere with
\[
D\Psi(t, \zeta) = 
\begin{pmatrix}1&0 \\ \de_t\chi(t, \zeta) & \de_\zeta\chi(t, \zeta)\end{pmatrix} ,
\]
where $J_\Psi =  \de_\zeta\chi >0$ almost everywhere,
see~\cite[\S5.8.3]{Evans2010}.
In particular, if $u:\omega\to\R$, then
\begin{equation}\label{eq66967f49}
\begin{aligned}
	\de_{t} (u \circ \Psi) &= (\grad^f u) \circ \Psi , \\
	\de_\zeta (u \circ \Psi) &= (\de_\tau u \circ \Psi) \cdot \de_\zeta \chi .
\end{aligned}
\end{equation} 

We claim that 
\begin{equation}\label{eq66979337}
	\frac{1}{\de_\zeta\chi} \in L^\alpha_\loc(\tilde\omega) ,
	\qquad\text{with } \alpha = p-1 .
\end{equation}
Indeed, $1/\de_\zeta\chi = J_{\Psi^{-1}}\circ\Psi$, where $J_{\Psi^{-1}}\in L^{p}_\loc(\omega)$.
Notice that, a priori, we only have $J_{\Psi^{-1}}\in L^{p/2}_\loc(\omega)$,
but, by the specific form of $\Psi$, $D\Psi^{-1}$ is of the form 
$\begin{pmatrix}1&0 \\ A & B\end{pmatrix}$ for some functions $A,B\in L^p_\loc(\omega)$.
So, $J_{\Psi^{-1}} = B\in L^{p}_\loc(\omega)$.
Going back to the proof of~\eqref{eq66979337}, for every $K\Subset\tilde\omega$,
\begin{align*}
	\int_K \left|\frac{1}{\de_\zeta\chi}\right|^\alpha \did t \did\zeta
	&= \int_K \left|J_{\Psi^{-1}}\circ\Psi\right|^\alpha \frac{J_\Psi}{J_\Psi} \did t \did\zeta 
	= \int_{\Psi(K)} \left|J_{\Psi^{-1}}\right|^\alpha \frac{1}{J_\Psi\circ\Psi^{-1}} \did\eta \did\tau \\
	&= \int_{\Psi(K)} \left|J_{\Psi^{-1}}\right|^\alpha J_{\Psi^{-1}} \did\eta \did\tau 
	= \int_{\Psi(K)} \left|J_{\Psi^{-1}}\right|^{p} \did\eta \did\tau 
	<\infty .
\end{align*}

From the identity $\de_t \chi = f\circ\Psi$ and Proposition~\ref{prop66964d64}, we obtain that, 
if $f\in W^{1,q}_\loc(\omega)$ and if $\Psi$ is a $p$-bi-Sobolev $f$-Lagrangian homeomorphism with $p>2$,
then 
\begin{equation}\label{eq6697d093}
	\de_t \chi\in W^{1,s}_\loc(\tilde\omega)
	\qquad\text{for } s=\frac{ pq(p-2)}{p^2+q(p-2)} .
\end{equation}

Moreover, we can apply the identities~\eqref{eq66967f49} to deduce
\begin{equation}\label{eq66967fde}
\begin{aligned}
	\de_t^2\chi
	&= (\grad^f f) \circ \Psi , \\
	\de_\zeta \de_t \chi &= (\de_\tau f \circ \Psi) \cdot \de_\zeta \chi.
\end{aligned}
\end{equation} 

\begin{proposition}\label{prop6697d3da}
Let $\omega \subset\R^2$ be open, 
$f\in  C^0(\omega)$,
and $\Psi \in W^{1,1}_\loc (\tilde\omega ; \omega)$ be a Sobolev $f$-Lagrangian homeomorphism. 
Then $\Psi$ satisfies the Lusin~$(N)$ condition.
\end{proposition}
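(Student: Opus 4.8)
The plan is to exploit the triangular form $\Psi(t,\zeta)=(t,\chi(t,\zeta))$, which preserves the first coordinate, so as to reduce the two-dimensional Lusin~$(N)$ condition to a one-dimensional statement about the slice maps $\chi(t,\cdot)$ via Fubini's theorem. The decisive point will be that the Sobolev regularity $\Psi\in W^{1,1}_\loc$ forces each slice $\zeta\mapsto\chi(t,\zeta)$ to be \emph{absolutely continuous} for almost every $t$, and one-dimensional absolutely continuous functions are classically known to satisfy the Lusin~$(N)$ condition.

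First I would reduce to a Borel set. Given a null set $E\subset\tilde\omega$, pick a $G_\delta$ set $E'\supseteq E$ with $E'\subset\tilde\omega$ and $|E'|=0$. Since $\Psi$ is a homeomorphism, it is a bijective open map, so $\Psi(E')$ is again $G_\delta$, hence Borel and Lebesgue measurable, and $\Psi(E)\subset\Psi(E')$; it therefore suffices to prove $|\Psi(E')|=0$, so we may assume $E$ is Borel. Next, from $\chi\in W^{1,1}_\loc(\tilde\omega)$ and the ACL characterization of Sobolev functions, there is a representative that is absolutely continuous on almost every segment parallel to the $\zeta$-axis. As $\chi$ is continuous and agrees with this representative almost everywhere, and two continuous functions coinciding a.e.\ on a line coincide on all of it, the continuous map $\chi$ itself satisfies: for a.e.\ $t$, the function $\zeta\mapsto\chi(t,\zeta)$ is locally absolutely continuous on $\tilde\omega_{1,t}:=\{\zeta:(t,\zeta)\in\tilde\omega\}$.

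I would then combine Fubini with the one-dimensional Lusin~$(N)$ property. Writing $E_t:=\{\zeta:(t,\zeta)\in E\}$, the hypothesis $|E|=0$ gives $\LL^1(E_t)=0$ for a.e.\ $t$. For every $t$ in the full-measure set where, simultaneously, $\LL^1(E_t)=0$ and $\chi(t,\cdot)$ is absolutely continuous, the one-dimensional Lusin~$(N)$ property of absolutely continuous functions yields $\LL^1\bigl(\chi(t,E_t)\bigr)=0$. Since $\Psi$ fixes the first coordinate, the $t$-slice of the image is exactly $\{\tau:(t,\tau)\in\Psi(E)\}=\chi(t,E_t)$, so Fubini's theorem gives $|\Psi(E)|=\int_{\R}\LL^1\bigl(\chi(t,E_t)\bigr)\did t=0$, as desired.

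The main obstacle is precisely the passage from continuity and monotonicity of the slices $\chi(t,\cdot)$ to their \emph{absolute continuity}: a merely continuous monotone map can fail the Lusin~$(N)$ condition, as the Cantor function carries a null set onto a set of full measure, so the $f$-Lagrangian and ODE structure by itself is not enough. What saves the argument is the hypothesis $\Psi\in W^{1,1}_\loc$ via the ACL theorem, which forces the slices to be absolutely continuous. A secondary, purely technical point---the measurability of $\Psi(E)$---is disposed of by the reduction to $G_\delta$ sets in the first step. Notice that this soft, essentially one-dimensional argument is what lets the weak regularity $W^{1,1}_\loc$ suffice, in place of the $W^{1,2}_\loc$ that a general two-dimensional Lusin~$(N)$ theorem for planar homeomorphisms would require.
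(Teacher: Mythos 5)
Your proof is correct and follows essentially the same route as the paper's: the ACL characterization of $W^{1,1}_\loc$ to get local absolute continuity of the slices $\zeta\mapsto\chi(t,\zeta)$ for a.e.\ $t$, the one-dimensional Lusin~$(N)$ property of absolutely continuous functions, and Fubini--Tonelli to assemble the two-dimensional conclusion. Your preliminary reduction to a $G_\delta$ set to secure measurability of $\Psi(E)$ is a careful addition that the paper leaves implicit, but it does not change the argument.
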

\begin{proof}
Let $E\subset\tilde\omega$ have $\mathscr{L}^2$-measure zero. 
For $t\in\R$, define $E_t=\{\zeta\in\R: (t,\zeta)\in E\}$.
Then $\LL^1(E_t)=0$ for a.e.~$t$.

Write $\Psi$ as $\Psi(t, \zeta) = (t, \chi(t, \zeta))$, with $\chi\in W^{1,1}_\loc({\tilde{\omega}})$. 
By the classical result \cite[Theorem 4.21\textit{(i)}]{Evans2015}, 
for almost every $t$ the map $\zeta\mapsto\chi(t,\zeta)$ is locally absolutely continuous,
and thus it satisfies the Lusin $(N)$ condition.
It follows that $\LL^1(\chi(t,E_t))=0$ for a.e.~$t$.
By Fubini-Tonelli Theorem, we conclude that $\LL^2(\Psi(E))=0$.
\end{proof}

\begin{theorem}\label{thm667be351}
	Let $I,J\subset\R$ be open intervals with $I$ of length $\ell>0$.
	Suppose that $f\in W^{1,1}_\loc(I\times J)\cap C^0(\bar I\times \bar J)$ and that 
	\begin{equation}
		\int_{I\times J} \exp\left( \frac{\ell p^2}{p-1} | \de_\tau f(\eta,\tau) | \right) \did \eta\did \tau < \infty .
	\end{equation}
	Then, for each point $(\eta_0,\tau_0)\in I\times J$, there exists a $p$-bi-Sobolev $f$-Lagrangian homeomorphism $\Psi:\tilde\omega\to \omega$ satisfying
	\begin{enumerate}
	\item
	$\tilde \omega = (\eta_0-\epsilon,\eta_0+\epsilon) \times (\tau_0 - \epsilon,\tau_0+\epsilon)$
	\item
	$\Psi(\eta,\tau) = (\eta,\chi(\eta,\tau))$ for all $(\eta,\tau)\in\tilde\omega$
	\item
	$\chi(\eta_0,\tau) = \tau$ for all $\tau\in(\tau_0 - \epsilon,\tau_0+\epsilon)$.
	\end{enumerate}
\end{theorem}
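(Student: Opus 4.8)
The plan is to construct $\chi$ as the flow of the characteristic ODE $\de_t\chi(t,\zeta)=f(t,\chi(t,\zeta))$ issuing from the line $\{t=\eta_0\}$, parametrized by its own initial height, so that $\chi(\eta_0,\zeta)=\zeta$. With this choice, properties (2) and (3) of the statement are built into the construction, and condition~(iii) of Definition~\ref{def67f25313} is precisely the ODE; the domain $\tilde\omega$ is taken to be a small square $(\eta_0-\epsilon,\eta_0+\epsilon)\times(\tau_0-\epsilon,\tau_0+\epsilon)$ on which all the flow lines stay inside $I\times J$. Since $f$ is continuous, existence of at least one solution on $[\eta_0-\epsilon,\eta_0+\epsilon]$ for each initial value $\zeta$ follows from Peano's theorem, once $\epsilon$ is chosen small enough that solutions do not leave the compact set on which $f$ is bounded.

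First I would establish the homeomorphism property, namely that $\zeta\mapsto\chi(t,\zeta)$ is a strictly increasing bijection depending continuously on $(t,\zeta)$. The key is a comparison estimate for two solutions $\chi_1,\chi_2$. Because $f\in W^{1,1}_\loc\cap C^0$, for almost every fixed $t$ the slice $f(t,\cdot)$ is absolutely continuous, so $f(t,\chi_1)-f(t,\chi_2)=\big(\int_0^1\de_\tau f(t,\chi_2+\theta(\chi_1-\chi_2))\did\theta\big)(\chi_1-\chi_2)$, and a two-sided Grönwall argument yields $|\chi_1(t)-\chi_2(t)|$ comparable to $|\chi_1(\eta_0)-\chi_2(\eta_0)|\exp(\pm\int_{\eta_0}^t|\de_\tau f(s,\cdot)|\did s)$ along the characteristics. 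This simultaneously gives uniqueness of solutions, the non-crossing (hence strict monotonicity in $\zeta$) required for condition~(ii), and continuity of the inverse, so that $\Psi(t,\zeta)=(t,\chi(t,\zeta))$ is a homeomorphism onto its image $\omega$.

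The regularity statement is where the hypothesis is consumed. Differentiating the ODE in $\zeta$ gives the variational identity $\de_t(\de_\zeta\chi)=(\de_\tau f)(t,\chi)\,\de_\zeta\chi$, whence $\de_\zeta\chi(t,\zeta)=\exp\big(\int_{\eta_0}^t\de_\tau f(s,\chi(s,\zeta))\did s\big)>0$. To obtain $\Psi\in W^{1,p}_\loc$ I must show $\de_\zeta\chi\in L^p_\loc(\tilde\omega)$, and to obtain $\Psi^{-1}\in W^{1,p}_\loc$ I must show $1/\de_\zeta\chi\in L^{p-1}_\loc(\tilde\omega)$, exactly as in the computation leading to~\eqref{eq66979337}. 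For the former I would apply Jensen's inequality in the flow-time variable $s$ over the interval $[\eta_0,t]$ of length at most $\ell$ (this produces the factor $\ell$ in the exponent), then change variables $\tau=\chi(s,\zeta)$ to move the integral onto $\omega\subset I\times J$, paying a Hölder factor with conjugate exponent $\tfrac{p}{p-1}$ to separate off the resulting Jacobian; combined with the power $p$ coming from the $L^p$-norm, this is what turns the pointwise weight into $\exp\big(\tfrac{\ell p^2}{p-1}|\de_\tau f|\big)$, which is integrable by assumption. The bound on $1/\de_\zeta\chi$ is analogous and is dominated by the same quantity, since $p-1<p$.

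The main obstacle is the self-referential nature of these estimates: the integral $\int_{\eta_0}^t|\de_\tau f(s,\chi(s,\zeta))|\did s$ that controls both uniqueness and the Jacobian is itself taken along the unknown characteristics, so the change of variables needed to invoke the exponential-integrability hypothesis reintroduces the very Jacobian $\de_\zeta\chi$ one is trying to bound. Closing this loop is precisely what forces the sharp constant $\tfrac{\ell p^2}{p-1}$, and it is the technical heart of the construction of Lagrangian coordinates carried out in~\cite{zbMATH07739119}. Accordingly, I would either run this bootstrap directly or simply quote the existence-and-regularity theorem of~\cite{zbMATH07739119}, and then conclude by restricting to the normalized square $\tilde\omega$ to obtain properties (1)--(3).
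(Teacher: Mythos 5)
Your proposal is correct and, in its operative form, coincides with the paper's proof: both ultimately rest on quoting the existence-and-regularity theorem for Lagrangian flows in \cite{zbMATH07739119} (Theorem~E there) and then restricting to a small square around $(\eta_0,\tau_0)$ to obtain properties (1)--(3); your preliminary Peano/Gr\"onwall/Jensen discussion is a faithful heuristic for why that theorem holds and where the constant $\frac{\ell p^2}{p-1}$ comes from, but, as you yourself acknowledge, it is circular without the cited machinery, so the quotation is the actual proof. The one step the paper takes that you omit is replacing $f$ by a cut-off $\hat f=\phi f$ with $\phi\equiv 1$ on a neighborhood $\omega^*$ of $(\eta_0,\tau_0)$, which makes the vector field globally $W^{1,1}(I\times J)\cap C^0_c(I\times J)$ and lets Theorem~E produce a flow $X:I\times I\times J\to J$ on the whole strip; one then shrinks $\tilde\omega$ so that $\Psi(\tilde\omega)\subset\omega^*$, where $\hat f=f$. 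Your alternative of shrinking the domain first so that characteristics stay inside a compact set is workable for existence, but it is slightly more delicate to reconcile with the global hypotheses of the quoted theorem (the flow must not exit $J$), so the cut-off is worth keeping.
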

\begin{proof}
	Let $\epsilon_1>0$ be such that
	\[
	\omega^* := (\eta_0-\epsilon_1,\eta_0+\epsilon_1) \times (\tau_0-\epsilon_1,\tau_0+\epsilon_1) \Subset I\times J ,
	\]
	and let $\phi\in C^1_c(I\times J)$ be a cut-off function such that $0\le\phi\le 1$ in $I\times J$ and $\phi\equiv1$ in $\omega^*$.
	Consider $\hat f := \phi f$ on $I\times J$.
	Notice that $\hat f\in W^{1,1}(I\times J) \cap C^0_c(I\times J)$ and
	\begin{equation}\label{eq688a4863}
		\exp(|\de_\tau\hat f|)
		\le \exp\left( \| \de_\tau\phi\|_{L^\infty(\R^2)} \|f\|_{L^\infty(I\times J)} \right) \cdot \exp(|\de_\tau f|) 
	\end{equation}
	in $I\times J$.
	In particular, from~\eqref{eq688a4863} and~\eqref{eq66967fde}, it follows that
	\begin{equation}
		\exp(\beta|\de_\tau\hat f|) \in L^1(I\times J)
	\end{equation}
	with $\beta := \frac{\ell p^2}{p-1}$.
	We are now going to apply \cite[Theorem E]{zbMATH07739119}.
	Note that in \cite[Theorem E]{zbMATH07739119}, there is a distinction between time ``$t$'' and space ``$x$'' variables.
	In our setting,  ``$\tau$'' must be interpreted as the space variable and ``$\eta$'' as the time variable. 
	Under this convention, we can apply \cite[Theorem E]{zbMATH07739119} with $\Omega = J$ and $b=\hat f$, 
	and we get the existence of a unique flow $X:I\times I\times J\to J$ associated to $\hat f$ satisfying, for all $\eta,\bar\eta\in\bar I$, and all $\tau\in J$,
	\[
	\begin{cases}
		\frac{\dd X}{\dd \eta} (\eta,\bar\eta,\tau) = \hat f(\eta, X(\eta,\bar\eta,\tau)) \\
		X(\bar\eta,\bar\eta,\tau) = \tau .
	\end{cases} 
	\]
	and with $X(\eta,\bar\eta,\cdot)\in W^{1,p}(J)$ and
	\[
	\int_J |\de_\tau X(\eta,\bar\eta,\tau)|^p \did \tau
	\le \frac1\ell \int_J\int_I \exp(\beta|\de_\tau\hat f|) \did \eta\did\tau .
	\]
	Let's now define $\chi(\eta,\tau) := X(\eta,\eta_0,\tau)$ and
	$\Psi(\eta,\tau) := (\eta,\chi(\eta,\tau))$ if $(\eta,\tau)\in I\times J$.
	Then, it is easy to see that $\Psi:I\times J\to I\times J$ is a $p$-bi-Sobolev Lagrangian homeomorphism of $\hat f$ between $I\times J$ and itself.
	Notice that $\Psi(\eta_0,\tau_0) = (\eta_0,\tau_0)$.
	Therefore, there is $\epsilon>0$ so small that 
	\[
	\tilde \omega 
	:= (\eta_0-\epsilon,\eta_0+\epsilon) \times (\tau_0 - \epsilon,\tau_0+\epsilon)
	\subset \Psi^{-1}(\omega^*) .
	\]
	Set $\omega := \Psi(\tilde\omega)$.
	Then $\Psi:\tilde\omega\to\omega$ turns out to be the desired $p$-bi-Sobolev Lagrangian homeomorphism of $f$.
\end{proof}

\section{Consequence of the first variation}\label{sec68009c4c}
\begin{theorem}[Stationarity in Lagrangian coordinates]\label{thm6698ea1a}
	Let $\omega,\tilde\omega\subset \R^2$ be open sets, 
	$f\in W_\loc^{q}(\omega)\cap C^0(\omega)$ and 
	let $\Psi:\tilde\omega \to \omega$, $\Psi(t,\zeta) = (t, \chi(t,\zeta))$, be
	 a $p$-bi-Sobolev $f$-Lagrangian homeomorphism.
	Assume that $p>2$ and $\frac{p^2}{(p-1)(p-2)} \le q < \infty$.
	Set $s=\frac{ pq(p-2)}{p^2+q(p-2)}$, $\alpha = p-1$ and $q'=\frac{q}{q-1}$ and assume that
	\begin{equation}\label{eq6697d2f4}
	s > q'
	\qquad\text{and}\qquad
	\frac{1}{p} + \frac{1}{s} + \frac{2}{\alpha} \le 1 .
	\end{equation}
	Notice that these conditions correspond to~\eqref{eq67f239e4} and~\eqref{eq67f2503b},
	and thus they are not contradictory by Lemma~\ref{lem67f24a18}.
	
	If $f$ is stationary as in~\eqref{eq669a073c},
	then
 	\begin{equation}\label{eq66968599}
 	\int_{\tilde{\omega}} 
	\frac{\de^2_t\chi }{ \sqrt{ 1+ (\de^2_t\chi)^2 } } \de_t\theta \did\mathscr L^2 = 0
 	\end{equation}
	for all $\theta \in C^\infty_c(\tilde\omega)$.
\end{theorem}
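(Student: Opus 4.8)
The plan is to pull the first variation $I_f$ back to the Lagrangian coordinates given by $\Psi$, to recognize the transformed integral as the left–hand side of~\eqref{eq66968599}, and then to run a density argument that upgrades a restricted family of test functions to all of $C^\infty_c(\tilde\omega)$.

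First I would fix $\phi\in W^{1,q'}_c(\omega)$, set $g:=\phi\circ\Psi$ and $\theta:=g\,\de_\zeta\chi=(\phi\circ\Psi)\,\de_\zeta\chi$ on $\tilde\omega$, and compute $I_f(\phi)$ after the substitution $(\eta,\tau)=\Psi(t,\zeta)$. By Proposition~\ref{prop6697d3da} the map $\Psi$ satisfies Lusin $(N)$, so Proposition~\ref{prop66977f11} applies with Jacobian $J_\Psi=\de_\zeta\chi$. The first factor transforms through~\eqref{eq66967fde} as $\bigl(\tfrac{\grad^ff}{\sqrt{1+(\grad^ff)^2}}\bigr)\circ\Psi=\tfrac{\de_t^2\chi}{\sqrt{1+(\de_t^2\chi)^2}}=:h$, which is bounded by $1$, while the chain rules~\eqref{eq66967f49} and the identities~\eqref{eq66967fde} give
\[
\bigl((\grad^f\phi+\de_\tau f\,\phi)\circ\Psi\bigr)\,\de_\zeta\chi
=\de_t g\,\de_\zeta\chi+g\,\de_\zeta\de_t\chi
=\de_t\bigl(g\,\de_\zeta\chi\bigr)=\de_t\theta ,
\]
where in the second equality I use the symmetry $\de_\zeta\de_t\chi=\de_t\de_\zeta\chi$ of the weak mixed derivatives (legitimate since $\de_t\chi\in W^{1,s}_\loc$ by~\eqref{eq6697d093} and $\chi\in W^{1,p}_\loc$) and the Leibniz rule. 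This yields the key identity $I_f(\phi)=-\int_{\tilde\omega} h\,\de_t\theta\did\mathscr L^2$. The exponent conditions~\eqref{eq6697d2f4} are exactly what makes every product above locally summable, so that the change of variables and the product rule are justified.

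Next I would produce a first, restricted supply of admissible $\theta$. Given $g\in C^\infty_c(\tilde\omega)$, set $\phi:=g\circ\Psi^{-1}$. Since $g\in W^{1,q}_\loc(\tilde\omega)\cap C(\tilde\omega)$ and $\Psi^{-1}\in W^{1,p}_\loc(\omega;\tilde\omega)$ has positive Jacobian, Proposition~\ref{prop66964d64} applied to $\Psi^{-1}$ gives $\phi\in W^{1,s}_\loc(\omega)\cap C(\omega)$; as $\phi$ has compact support and $s>q'$ by~\eqref{eq6697d2f4}, we get $\phi\in W^{1,q'}_c(\omega)$. By construction $\phi\circ\Psi=g$, hence $\theta=g\,\de_\zeta\chi$, and since $f$ is stationary Lemma~\ref{lem669a07cf} gives $I_f(\phi)=0$. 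The identity of the previous step then produces
\[
\int_{\tilde\omega} h\,\de_t\bigl(g\,\de_\zeta\chi\bigr)\did\mathscr L^2=0
\qquad\text{for every }g\in C^\infty_c(\tilde\omega) .
\]

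Finally I would remove the restriction by density. Because $|h|\le1$, the functional $\theta\mapsto\int_{\tilde\omega}h\,\de_t\theta$ is continuous for the seminorm $\|\de_t\,\cdot\,\|_{L^1(\tilde\omega)}$; crucially it only sees the $t$–derivative, so no second $\zeta$–derivative of $\chi$ ever enters. Thus, given $\theta\in C^\infty_c(\tilde\omega)$, it suffices to find $g_k\in C^\infty_c(\tilde\omega)$ with $\de_t(g_k\,\de_\zeta\chi)\to\de_t\theta$ in $L^1(\tilde\omega)$. Writing $G:=\theta/\de_\zeta\chi$ and using~\eqref{eq66967fde} to put $\de_t(g\,\de_\zeta\chi)=\bigl[\de_t g+g\,(\de_\tau f\circ\Psi)\bigr]\de_\zeta\chi$, I would mollify $G$, reducing matters to showing that $(\de_t g_k-\de_t G)\,\de_\zeta\chi$ and $(g_k-G)\,(\de_\tau f\circ\Psi)\,\de_\zeta\chi$ tend to $0$ in $L^1$. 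Here~\eqref{eq66979337} ($1/\de_\zeta\chi\in L^\alpha_\loc$, $\alpha=p-1$), the bound $\de_\zeta\chi\in L^p_\loc$, and $(\de_\tau f\circ\Psi)\,\de_\zeta\chi=\de_\zeta\de_t\chi\in L^s_\loc$ feed into Hölder's inequality, and the budget $\tfrac1p+\tfrac1s+\tfrac2\alpha\le1$ from~\eqref{eq6697d2f4} is precisely what guarantees that $G$ and the quantities $\de_t G\,\de_\zeta\chi$ and $G\,\de_\zeta\de_t\chi$ are locally summable, so the two error terms are controllable. Passing to the limit gives $\int_{\tilde\omega}h\,\de_t\theta\did\mathscr L^2=0$ for all $\theta\in C^\infty_c(\tilde\omega)$, which is~\eqref{eq66968599}.

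I expect this last density step to be the main obstacle: unlike the purely algebraic change of variables, it forces a careful tracking of the exponents $p,s,\alpha,q'$ to ensure the summability of $G=\theta/\de_\zeta\chi$ and of the associated first–order quantities, which is exactly the role played by the compatible conditions~\eqref{eq6697d2f4}.
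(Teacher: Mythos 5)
Your overall strategy coincides with the paper's: pull the first variation back through $\Psi$, use the algebraic identity $\de_\zeta\chi\,\de_t u+u\,\de_\zeta\de_t\chi=\de_t(u\,\de_\zeta\chi)$ to recognize $\de_t\theta$, establish the vanishing first for $\theta$ of the special form $g\,\de_\zeta\chi$ with $g\in C^\infty_c(\tilde\omega)$, and then reach arbitrary $\theta$ by approximating the quotient $\theta/\de_\zeta\chi$. One presentational caveat: your opening step fixes an arbitrary $\phi\in W^{1,q'}_c(\omega)$ and sets $g=\phi\circ\Psi$, but for such $\phi$ the composition $\phi\circ\Psi$ need not be Sobolev (Proposition~\ref{prop66964d64} only controls $T_\Psi$ on $W^{1,q}_\loc\cap C$, not on $W^{1,q'}_\loc$); this is harmless because in your second step you only ever use $\phi=g\circ\Psi^{-1}$ with $g$ smooth, which is exactly what the paper does.

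The genuine gap is in the final density step. You propose to mollify $G:=\theta/\de_\zeta\chi$ and implicitly use the formula $\de_tG=\de_t\theta/\de_\zeta\chi-\theta\,\de_t\de_\zeta\chi/(\de_\zeta\chi)^2$. But $\de_\zeta\chi$ is only known to be positive almost everywhere, with $\de_\zeta\chi\in L^p_\loc$ and $1/\de_\zeta\chi\in L^\alpha_\loc$; it may come arbitrarily close to $0$, the map $x\mapsto 1/x$ is not Lipschitz there, and hence neither the weak $t$-differentiability of $G$ nor this chain-rule formula is automatic. You would need an additional truncation argument (replace $1/x$ by $1/\max(x,\delta)$, apply the chain rule for Lipschitz functions of Sobolev functions, and pass to the limit using the $L^1_\loc$ bound on $\de_t\de_\zeta\chi/(\de_\zeta\chi)^2$ provided by the budget $\tfrac1p+\tfrac1s+\tfrac2\alpha\le1$). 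The paper sidesteps this entirely by mollifying $\chi$ rather than the quotient: it tests the restricted identity with $u_\epsilon=\theta/\de_\zeta\chi_\epsilon$, which is genuinely $C^\infty_c$ once $\de_\zeta\chi_\epsilon>0$ on $\spt\theta$, so that all derivatives are classical, and it controls the error terms via the pointwise Jensen-type inequality $0<1/\de_\zeta\chi_\epsilon\le(1/\de_\zeta\chi)*\rho_\epsilon$, which yields $\limsup_{\epsilon\to0}\|1/\de_\zeta\chi_\epsilon\|_{L^{\alpha}(K)}\le\|1/\de_\zeta\chi\|_{L^{\alpha}(K)}$. That uniform bound on the reciprocal of the mollified derivative is the one ingredient your sketch is missing (or must replace by the truncation argument above); your Hölder bookkeeping with the exponents $p$, $s$, $\alpha$, $q'$ from~\eqref{eq6697d2f4} is otherwise the same as the paper's.
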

\begin{proof}
	First, we show that
	\begin{equation}\label{eq66977e2d}
			\int_{\tilde\omega} 
			\frac{\de_t^2 \chi}{\sqrt{ 1 + (\de_t^2 \chi)^2 }}
			(\de_\zeta \chi \de_t u + u \de_\zeta \tilde{f} ) \did t \did\zeta=0 			
	\end{equation}
	for all $u\in C^\infty_c(\tilde\omega)$, where $\tilde f := f\circ\Psi = \de_t\chi$.
	
	Let $\tilde\phi\in C^\infty_c(\tilde\omega)$ and set $\phi := \tilde\phi\circ\Psi^{-1}$.
	By Proposition~\ref{prop66964d64}, $T_{\Psi^{-1}}:W^{1,q}_\loc(\tilde\omega)\cap C^0(\tilde\omega) \to W^{1,s}_\loc(\omega)\cap C^0(\omega)$ is continuous.
	Since $s>q'$ by~\eqref{eq6697d2f4},
	 then $W^{1,s}_\loc(\omega)\cap C^0(\omega) \subset W^{1,q'}_\loc(\omega)\cap C^0(\omega)$.
	Therefore, $\phi=T_{\Psi^{-1}}\tilde\phi \in W^{1,q'}_c(\omega)$.
	Since $f$ is stationary and by Lemma~\ref{lem669a07cf}, we have $I_f(\phi)=0$.
	Finally, we apply
	Propositions~\ref{prop6697d3da} and~\ref{prop66977f11},
	and the formulas~\eqref{eq66967f49} and \eqref{eq66967fde},
	to eventually obtain~\eqref{eq66977e2d}.
	
	Next, fix $\theta\in C^\infty_c(\tilde\omega)$ with $\spt(\theta)\subset K\Subset \tilde\omega$.

	We will perform a smooth mollification of $\chi$.
	For $\epsilon>0$, let $K_\epsilon := \{x\in\R^2 : {\rm dist}(x,K)<\epsilon\}$.
	Then $(K_\epsilon)_{\epsilon>0}$ is a family of bounded open sets containing $K$ and there exists $\epsilon_0>0$ such that $K_{2\epsilon}\Subset\tilde\omega$ for all $\epsilon\in(0,\epsilon_0)$.
	Let $\rho_\epsilon$ be a mollifier on $\R^2$ such that $\chi_\epsilon := \chi*\rho_\epsilon \in C^\infty(K_\epsilon)$ for $\epsilon\in (0,\epsilon_0)$.
	One can easily check the following properties.
	
	First, for $\epsilon$ small enough, $K_\epsilon$ is contained in $\tilde \omega$ and $\de_\zeta\chi_\epsilon >0$ on $K$.
	Second, 
	\begin{equation}\label{eq6697ce1d}
		0<\frac{1}{\de_\zeta \chi_\epsilon} \leq \left(\frac{1}{\de_\zeta \chi}\right)_\epsilon
		:= \left(\frac{1}{\de_\zeta \chi}\right) * \rho_\epsilon . 
	\end{equation}
	Indeed, the Hölder inequality implies
	\begin{align*}
	1 = \left\| \rho_\epsilon\right\| _{L^1(B(0,\epsilon))} 
	&\le \left\| \sqrt{\de_\zeta \chi(x-\cdot) \rho_\epsilon(\cdot)} \right\|_{L^2(B(0,\epsilon))}
		\left\| \sqrt{\frac{\rho_\epsilon(\cdot)}{\de_\zeta \chi(x-\cdot)}} \right\|_{L^2(B(0,\epsilon))}\\
	&=\sqrt{\de_\zeta \chi_\epsilon (x)} \cdot \sqrt{\left(\frac{1}{\de_\zeta \chi}\right)_\epsilon(x)} .
	\end{align*}
	Third, by~\eqref{eq6697ce1d} and by the fact that, by~\eqref{eq66979337}, 
	$\left(\frac{1}{\de_\zeta \chi}\right)_\epsilon \to \frac{1}{\de_\zeta \chi}$ in $L^\alpha(K)$,
	we have
	\begin{equation}\label{eq6697d0f1}
		\limsup_{\epsilon\to 0^+} \left\| \frac{1}{\de_\zeta \chi_\epsilon} \right\|_{L^{\alpha}(K)} 
		\le  \left\| \frac{1}{\de_\zeta \chi}\right\|_{L^{\alpha}(K)}.
	\end{equation}
	
	Next, we consider condition~\eqref{eq66977e2d} with the test function $u_\epsilon = \frac{\theta}{\de_\zeta\chi_\epsilon}$
	and we claim that 
	\begin{equation}\label{eq669784e5}
		\lim_{\epsilon\to0}\int_{\tilde\omega} 
		\frac{\de_t^2 \chi}{\sqrt{ 1 + (\de_t^2 \chi)^2 }}
		(\de_\zeta \chi \de_t u_\epsilon + u_\epsilon \de_\zeta \tilde{f} ) \did t \did\zeta
		= \int_{\tilde\omega} 
		\frac{\de_t^2 \chi}{\sqrt{ 1 + (\de_t^2 \chi)^2 }}
		\de_t\theta \did t \did\zeta .
	\end{equation}
	To show~\eqref{eq669784e5}, notice that $\left| \frac{\de_t^2 \chi}{\sqrt{ 1 + (\de_t^2 \chi)^2 }} \right| \le 1$ and that 
	\begin{multline*}
	(\de_\zeta \chi \de_t u_\epsilon + u_\epsilon \de_\zeta \tilde{f} ) - \de_t\theta \\
	= \de_{t} \theta \frac{\de_\zeta \chi - \de_\zeta \chi_\epsilon}{\de_\zeta \chi_\epsilon}
	+ \theta \frac{ \de_\zeta \de_{t} \chi (\de_\zeta \chi_\epsilon-\de_\zeta \chi)}{(\de_\zeta \chi_\epsilon)^2}
	+ \theta \frac{ \de_\zeta \chi \left(\de_\zeta \de_{t} \chi - \de_\zeta \de_{t} \chi_\epsilon\right)}{(\de_\zeta \chi_\epsilon)^2}.
	\end{multline*}
	Recall also that $\de_\zeta\chi\in L^p_\loc(\tilde\omega)$ because $\Psi\in W^{1,p}_\loc(\tilde\omega)$, 
	$\de_\zeta \de_{t} \chi\in L^s_\loc(\tilde\omega)$ by~\eqref{eq6697d093},
	and $\frac{1}{\de_\zeta\chi_\epsilon}\in L^\alpha_\loc(\tilde\omega)$ with $\alpha = p-1$
	by~\eqref{eq66979337}.
	Thanks to~\eqref{eq6697d2f4},
	the generalized Hölder inequality gives a constant $C=C(|K|)$ depending on the measure of $K$ such that 
	\begin{multline*}
	\left| \int_{\tilde\omega} 
		\frac{\de_t^2 \chi}{\sqrt{ 1 + (\de_t^2 \chi)^2 }}
		(\de_\zeta\chi \de_t u_\epsilon + u_\epsilon \de_\zeta \tilde{f} - \de_t\theta) \did t \did\zeta \right| \\
	\le \| \theta \|_{W^{1,\infty}(\tilde\omega)}
		\bigg( \left\|\frac{\de_\zeta\chi - \de_\zeta\chi_\epsilon}{\de_\zeta\chi_\epsilon} \right\|_{L^1(K)} \hfill\\
		+ \left\| \frac{ \de_\zeta \de_{t} \chi (\de_\zeta\chi_\epsilon-\de_\zeta\chi)}{(\de_\zeta\chi_\epsilon)^2} \right\|_{L^1(K)} \\
		\hfill+ \left\| \frac{ \de_\zeta\chi \left(\de_\zeta \de_{t} \chi - \de_\zeta \de_{t} \chi_\epsilon\right)}{(\de_\zeta\chi_\epsilon)^2} \right\|_{L^1(K)}
		 \bigg) \\
	\le C \| \theta \|_{W^{1,\infty}(\tilde\omega)}
		\bigg( 
		\left\|\frac{1}{\de_\zeta\chi_\epsilon} \right\|_{L^\alpha(K)}
		\| \de_\zeta\chi - \de_\zeta\chi_\epsilon \|_{L^p(K)}
		 \hfill\\
		+ \|\de_\zeta \de_{t} \chi \|_{L^s(K)} 
			\left\| \frac{ 1 }{(\de_\zeta\chi_\epsilon)^2} \right\|_{L^{\alpha/2}(K)}
			\|\de_\zeta\chi_\epsilon-\de_\zeta\chi\|_{L^p(K)}
			 \\
		\hfill+ \|\de_\zeta\chi\|_{L^p(K)} 
			\left\| \frac{ 1 }{(\de_\zeta\chi_\epsilon)^2} \right\|_{L^{\alpha/2}(K)}
			\|\de_\zeta \de_{t} \chi - \de_\zeta \de_{t} \chi_\epsilon\|_{L^s(K)}
		 \bigg) 
	\end{multline*}	
	Using~\eqref{eq6697d0f1} and the convergence of the mollification as $\epsilon\to0$, we obtain the limit in~\eqref{eq669784e5}.
\end{proof}

\begin{remark}\label{rem669a408d}
	Let $\omega,\tilde\omega\subset \R^2$ be open sets, 
	$f\in W_\loc^{1,q}(\omega)\cap C^0(\omega)$ and 
	let $\Psi:\tilde\omega \to \omega$, $\Psi(t,\zeta) = (t, \chi(t,\zeta))$, be
	 a $p$-bi-Sobolev $f$-Lagrangian homeomorphism.
	Suppose that~\eqref{eq66968599} is fulfilled for all $\theta \in C^\infty_c(\tilde\omega)$.
	
	Then \eqref{eq66968599} implies that $\de_\zeta^2\chi(t,\zeta)$ is constant in $t$ for almost every $\zeta$.
	Thereofore, up to precomposing $\Psi$ with a translation in $\R^2$,
	we have $\tilde\omega = (\eta_0-\epsilon,\eta_0+\epsilon) \times (\tau_0-\epsilon,\tau_0+\epsilon)$
	for some $(\eta_0,\tau_0)\in\omega$ and $\epsilon>0$,
	and 
	\begin{equation}\label{eq669912be}
	\chi(t,\zeta) = \frac{a(\zeta)}{2} (t-\eta_0)^2 + b(\zeta) (t-\eta_0) + c(\zeta) ,
	\end{equation}
	for some functions $a,b,c$.
	
	Moreover, we can show that 
	\[
	a,b,c\in W^{1,p}_\loc((\tau_0-\epsilon,\tau_0+\epsilon))\cap C^0((\tau_0-\epsilon,\tau_0+\epsilon)) .
	\]
	Indeed, fix three distinct values $t_1,t_2,t_3$ 
	such that
	the identity~\eqref{eq669912be} holds for almost every $\zeta$
	and the functions $\zeta\mapsto\chi(t_j,\zeta)$ are $W^{1,p}_\loc$.
	Their existence is ensured by the fact that $\chi\in W^{1,p}_\loc$.
	Then it is clear that each one of the functions $a,b,c$ are linear combinations of the continuous Sobolev functions
	$\chi(t_1,\cdot), \chi(t_2,\cdot), \chi(t_3,\cdot)$, 
	as the following computation makes explicit:
	\begin{align*}
	\begin{pmatrix}
	a(\zeta) \\
	b(\zeta) \\
	c(\zeta) 
	\end{pmatrix}
	&=
	\begin{pmatrix}
		\frac{t_{1}^{2}}{2} & t_{1} & 1\\
		\frac{t_{2}^{2}}{2} & t_{2} & 1\\
		\frac{t_{3}^{2}}{2} & t_{3} & 1
	\end{pmatrix}^{-1}
	\begin{pmatrix}
	\chi(t_1,\zeta) \\
	\chi(t_2,\zeta) \\
	\chi(t_2,\zeta) 
	\end{pmatrix} \\
	&= 
	\begin{pmatrix}
		\frac{2}{t_{1}^{2} - t_{1} t_{2} - t_{1} t_{3} + t_{2} t_{3}} 
			& - \frac{2}{t_{1} t_{2} - t_{1} t_{3} - t_{2}^{2} + t_{2} t_{3}} 
			& \frac{2}{t_{1} t_{2} - t_{1} t_{3} - t_{2} t_{3} + t_{3}^{2}}\\
		\frac{- t_{2} - t_{3}}{t_{1}^{2} - t_{1} t_{2} - t_{1} t_{3} + t_{2} t_{3}} 
			& \frac{t_{1} + t_{3}}{t_{1} t_{2} - t_{1} t_{3} - t_{2}^{2} + t_{2} t_{3}} 
			& \frac{- t_{1} - t_{2}}{t_{1} t_{2} - t_{1} t_{3} - t_{2} t_{3} + t_{3}^{2}}\\
		\frac{t_{2} t_{3}}{t_{1}^{2} - t_{1} t_{2} - t_{1} t_{3} + t_{2} t_{3}} 
			& - \frac{t_{1} t_{3}}{t_{1} t_{2} - t_{1} t_{3} - t_{2}^{2} + t_{2} t_{3}} 
			& \frac{t_{1} t_{2}}{t_{1} t_{2} - t_{1} t_{3} - t_{2} t_{3} + t_{3}^{2}}
	\end{pmatrix}
	\begin{pmatrix}
	\chi(t_1,\zeta) \\
	\chi(t_2,\zeta) \\
	\chi(t_2,\zeta) 
	\end{pmatrix} .
	\end{align*}
\end{remark}

Under the hypothesis of Theorem~\ref{thm667be351}, we have a stronger result.

\begin{corollary}\label{cor669a6101}
	Let $f\in W^{1,q}_\loc(\omega)\cap C^0(\omega)$ with $q>2$ and
	 such that~\eqref{eq669a34fb} holds.
	Suppose that $f$ is stationary.
	Then 
	for every $p\ge 1$ the following holds:
	for every $(\eta_0,\tau_0)\in\omega$ there are $\epsilon>0$ and 
	$\chi:(\eta_0-\epsilon,\eta_0+\epsilon)\times(\tau_0-\epsilon,\tau_0+\epsilon)\to\R$
	such that the map $\Psi(t,\zeta) = (t,\chi(t,\zeta))$ is a $p$-bi-Sobolev $f$-Lagrangian homeomorphism,
	where $\chi$ is of the form
	\[
	\chi(t,\zeta) = \frac{a(\zeta)}{2} (t-\eta_0)^2 + b(\zeta) (t-\eta_0) + \zeta ,
	\]
	for some $a,b\in W^{1,p}_\loc((\tau_0-\epsilon,\tau_0+\epsilon))\cap C^0((\tau_0-\epsilon,\tau_0+\epsilon))$.
	Moreover, for a.e.~$\zeta$, $a(\zeta) = \grad^ff(\eta_0,\zeta)$  and $b(\zeta) = f(\eta_0,\zeta)$.
	
	As a consequence, we have that $f,\grad^ff\in C^0(\omega)\cap W^{1,p}_\loc(\omega)$,
	for every $p\ge1$.
	Moreover, if $\omega = \R^2$, then we can take $\epsilon = \infty$;
	in particular, the function $\Psi(t,\zeta) = (t,\chi(t,\zeta))$ with 
	\begin{equation}\label{eq68aa2429}
	\chi(t,\zeta) = \frac{\grad^ff(0,\zeta)}{2} t^2 + f(0,\zeta) t + \zeta 
	\end{equation}
	is a $p$-bi-Sobolev $f$-Lagrangian homeomorphism $\R^2\to\R^2$, for every $p\ge1$.
\end{corollary}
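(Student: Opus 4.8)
The plan is to combine the existence of a $p$-bi-Sobolev Lagrangian chart from Theorem~\ref{thm667be351} with the structural consequence of stationarity in those coordinates (Theorem~\ref{thm6698ea1a} and Remark~\ref{rem669a408d}), and then to propagate the local information to all exponents and, when $\omega=\R^2$, to a single global chart. Fix $(\eta_0,\tau_0)\in\omega$. The exponential integrability~\eqref{eq669a34fb} lets us apply Theorem~\ref{thm667be351} for any exponent $p$, after shrinking the $\eta$-interval so that its length $\ell$ satisfies $\frac{\ell p^2}{p-1}\le\kappa$; this yields, on a square $\tilde\omega$ centred at $(\eta_0,\tau_0)$, a $p$-bi-Sobolev $f$-Lagrangian homeomorphism $\Psi(t,\zeta)=(t,\chi(t,\zeta))$ with $\chi(\eta_0,\cdot)=\mathrm{id}$. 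I would then pick $p$ large enough that~\eqref{eq6697d2f4} holds: as $p\to\infty$ one checks $s\to q$ from below, while $q'=\frac{q}{q-1}<q$ because $q>2$, and $\frac1p+\frac1s+\frac2{p-1}\to\frac1q<1$, so every hypothesis of Theorem~\ref{thm6698ea1a} is met for $p$ large. For such $p$, \eqref{eq66968599} holds and Remark~\ref{rem669a408d} forces the quadratic form $\chi(t,\zeta)=\frac{a(\zeta)}2(t-\eta_0)^2+b(\zeta)(t-\eta_0)+c(\zeta)$ with $a,b,c\in W^{1,p}_\loc\cap C^0$, and the normalization $\chi(\eta_0,\cdot)=\mathrm{id}$ gives $c(\zeta)=\zeta$.

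To identify the coefficients I would evaluate the Lagrangian identity $\de_t\chi=f\circ\Psi$ and the second relation in~\eqref{eq66967fde}, namely $\de_t^2\chi=(\grad^f f)\circ\Psi$, at $t=\eta_0$, obtaining $b(\zeta)=f(\eta_0,\zeta)$ and $a(\zeta)=\grad^f f(\eta_0,\zeta)$ for a.e.~$\zeta$. To reach the conclusion for \emph{every} $p\ge1$, I would exploit that the quadratic profile is intrinsic: re-running Theorems~\ref{thm667be351} and~\ref{thm6698ea1a} at the same base point with an arbitrarily large exponent (shrinking $\tilde\omega$ as needed) produces, by uniqueness of the Lagrangian flow, the very same $\chi$, now known to be $W^{1,p}_\loc$; hence $a,b\in W^{1,p}_\loc\cap C^0$ for all $p$, while exponents below the threshold are subsumed by the local embedding $W^{1,p'}_\loc\subset W^{1,p}_\loc$.

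Since $a,b\in W^{1,p}_\loc\cap C^0$ for all $p$, the functions $F(t,\zeta):=f\circ\Psi=a(\zeta)(t-\eta_0)+b(\zeta)$ and $G(t,\zeta):=(\grad^f f)\circ\Psi=a(\zeta)$ lie in $W^{1,m}_\loc(\tilde\omega)\cap C^0$ for every $m$. Writing $f=F\circ\Psi^{-1}$ and $\grad^f f=G\circ\Psi^{-1}$ and applying the composition Proposition~\ref{prop66964d64} to the homeomorphism $\Psi^{-1}$ (whose inverse $\Psi$ and itself are $W^{1,p}_\loc$ for all $p$, with positive Jacobian), I would let the exponents tend to infinity to conclude $f,\grad^f f\in W^{1,P}_\loc(\omega)\cap C^0(\omega)$ for every $P$.

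When $\omega=\R^2$ I would define $\chi$ by the explicit formula~\eqref{eq68aa2429} with $a(\zeta)=\grad^f f(0,\zeta)$ and $b(\zeta)=f(0,\zeta)$, now globally continuous and in $W^{1,p}_\loc$, and verify that $t\mapsto(t,\chi(t,\zeta))$ is the horizontal curve through $(0,\zeta)$. The main obstacle is exactly the gluing of the local quadratic charts into this single global one: using that $\de_t^2\chi=(\grad^f f)\circ\Psi$ is constant in $t$ along each curve, so that $\grad^f f$ is constant along horizontal curves, together with the local quadratic form at every base point and the uniqueness of the flow, a continuation argument shows that the unique solution of $\dot y=f(t,y)$, $y(0)=\zeta$, coincides with the globally defined parabola for all $t\in\R$ and in particular never blows up. Finally, uniqueness of horizontal curves makes the parabolas a disjoint foliation of $\R^2$, so that $\zeta\mapsto\chi(\eta,\zeta)$ is a homeomorphism of $\R$ for each $\eta$, $\Psi$ is a global homeomorphism, and the $p$-bi-Sobolev property for all $p$ follows from its local validity on a covering by small squares.
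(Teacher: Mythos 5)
Your argument for the local part of the statement is essentially the paper's: apply Theorem~\ref{thm667be351} after shrinking the $\eta$-interval so that $\frac{\ell p^2}{p-1}\le\kappa$, take $p$ large enough (Lemma~\ref{lem67f24a18}) so that Theorem~\ref{thm6698ea1a} applies, invoke Remark~\ref{rem669a408d} together with the normalization $\chi(\eta_0,\cdot)=\mathrm{id}$ to get the quadratic profile with $c(\zeta)=\zeta$, identify $a,b$ via~\eqref{eq66967fde}, and obtain $f,\grad^ff\in W^{1,p}_\loc\cap C^0$ by writing $f=(a(\zeta)(t-\eta_0)+b(\zeta))\circ\Psi^{-1}$, $\grad^ff=a\circ\Psi^{-1}$ and applying Proposition~\ref{prop66964d64} to $\Psi^{-1}$ with large exponents. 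All of that matches the paper's proof.

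The gap is in the last step, where you assert that for the global chart on $\R^2$ ``the $p$-bi-Sobolev property for all $p$ follows from its local validity on a covering by small squares.'' This does not follow as stated: the local chart produced at a base point $(\eta_1,\tau_1)$ is normalized by $\chi(\eta_1,\cdot)=\mathrm{id}$, so it is \emph{not} the restriction of the global chart normalized at $t=0$; the two differ by a transition homeomorphism in the $\zeta$-variable, and to transfer the $W^{1,p}_\loc$ property of $\Psi_{\mathrm{loc}}^{-1}$ to the global $\Psi^{-1}$ you would still need to prove that this transition and its inverse are Sobolev and control the composition (another round of Proposition~\ref{prop66964d64}). The paper sidesteps this entirely by computing the global inverse in closed form: from $\tau=\chi(t,\zeta)$, $f(\eta,\tau)=a(\zeta)t+b(\zeta)$ and $\grad^ff(\eta,\tau)=a(\zeta)$ one obtains
\begin{equation*}
\Psi^{-1}(\eta,\tau)=\Bigl(\eta,\ \tau+\tfrac12\,\grad^ff(\eta,\tau)\,\eta^2-f(\eta,\tau)\,\eta\Bigr),
\end{equation*}
which lies in $W^{1,p}_\loc(\R^2;\R^2)$ precisely because the first part of the corollary already gives $f,\grad^ff\in W^{1,p}_\loc(\R^2)$. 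I recommend replacing your covering argument by this explicit formula. As for the fact that the global $\Psi$ is a homeomorphism of $\R^2$, your continuation/foliation sketch is in the right spirit, but it is essentially a re-derivation of \cite[Lemmas 3.3 and 3.5]{MR3753176}, which is what the paper cites; either cite that reference or carry out the continuation argument in full.
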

\begin{proof}
	Let $p\ge1$.
	Since the condition of being $p$-bi-Sobolev is local, we assume $p>\hat p$,
	where $\hat p$ is chosen as in Lemma~\ref{lem67f24a18}. 
	Take $\epsilon>0$ such that $\kappa = \frac{2\epsilon p^2}{p-1}$:
	then~\eqref{eq669a34fb} and Theorem~\ref{thm667be351} imply the existence of such a
	$p$-bi-Sobolev $f$-Lagrangian homeomorphism $\Psi$.
	The form of $\chi$ and the regularity of $a,b$ come from Remark~\ref{rem669a408d}.
	The fact that $c(\zeta) = \zeta$ is a consequence of Theorem~\ref{thm667be351}.
	The formulas $a(\zeta) = \grad^ff(\eta_0,\zeta)$  and $b(\zeta) = f(\eta_0,\zeta)$ follow from~\eqref{eq66967fde}.
	This shows the first part of the corollary.
	
	The continuity of $\grad^ff$ follows from \eqref{eq66967fde} and the fact that $(\eta_0,\tau_0)$ is arbitrary in $\omega$.
	The Sobolev regularity of $f$ and $\grad^ff$ is more tricky to show, but not difficult:
	First, notice that, in a neighborhood of $(\eta_0,\tau_0)$,
	we have $f = \de_t\chi\circ\Psi^{-1} = (at+b)\circ\Psi^{-1}$ and $\grad^ff = \de_t^2\chi\circ\Psi^{-1} = a\circ\Psi^{-1}$,
	where $a$ and $b$ belong to the class $W^{1,p}_\loc$.
	Second, using the first part of the corollary, we notice that,
	up to taking a smaller neighborhood of $(\eta_0,\tau_0)$, 
	for $\bar q$ arbitrarily large,
	we have that $\Psi$ is $\bar q$-bi-Sobolev homeomorphism
	and both $a$ and $b$ are in the class $W^{1,\bar q}_\loc$.
	Third, we apply Proposition~\ref{prop66964d64} with $\bar p$ large enough, $\bar s:=\bar p/2$ and $\bar q$ given by the formula $\bar q := \frac{\bar p^2 \bar s}{(\bar p-\bar s)(\bar p-2)}$:
	we only need $\bar p$ to be large enough to have $\bar q \ge1$.
	Note that, since $\bar q>\bar p$, $\Psi$ is a $\bar p$-bi-Sobolev homeomorphism too.
	So, from Proposition~\ref{prop66964d64} we obtain that $f$ and $\grad^ff$ belong to $W^{1,\bar s}_\loc(U)$, for some neighborhood $U$ of $(\eta_0,\tau_0)$.
	Since we can take $\bar s$ larger than $p$, so that we conclude $f,\grad^ff\in W^{1,\bar s}_\loc(U)$.
	Since $(\eta_0,\tau_0)$ is arbitrary, we conclude $f,\grad^ff\in W^{1,p}_\loc(\omega)$.
	
	Next,
	suppose $\omega=\R^2$ and fix $(\eta_0,\tau_0)\in\R^2$.
	Without loss of generality, we assume $(\eta_0,\tau_0)=(0,0)$,
	so that $\Psi(t,\zeta) = (t,\chi(t,\zeta))$ with $\chi$ as in~\eqref{eq68aa2429}.
	The fact that the map $\Psi$ is a homeomorphism $\R^2\to\R^2$ is proven in \cite[Lemmas 3.3 and 3.5]{MR3753176}.
	We need to show that $\Psi,\Psi^{-1}\in W^{1,p}_\loc(\R^2;\R^2)$, each $p\ge 1$.
	Since $a,b\in W^{1,p}_\loc(\R)$, then $\Psi\in W^{1,p}_\loc(\R^2;\R^2)$.
	For the inverse map, notice that, if $(t,\zeta),(\eta,\tau)\in\R^2$ are such that $(\eta,\tau) = \Psi(t,\zeta)$, then $\eta = t$,
	$\tau = \chi(t,\zeta) = a(\zeta) t^2/2 + b(\zeta) t + \zeta$,
	$f(\eta,\tau) = f(t,\chi(t,\zeta)) = \de_t\chi(t,\zeta) = a(\zeta) t + b(\zeta)$,
	and $\grad^ff(\eta,\tau) = \grad^ff(t,\chi(t,\zeta)) = \de_t^2\chi(t,\zeta) = a(\zeta)$.
	Therefore,
	\begin{align*}
		\tau 
		&= a(\zeta) t^2/2 + b(\zeta) t + \zeta \\
		&= \grad^ff(\eta,\tau) t^2/2 + (f(\eta,\tau)  - a(\zeta) t) t + \zeta \\
		&= - \grad^ff(\eta,\tau) t^2/2 + f(\eta,\tau) t + \zeta ,
	\end{align*}
	that is, 
	\[
	\zeta = \tau + \grad^ff(\eta,\tau) \eta^2/2 - f(\eta,\tau)\eta .
	\]
	We conclude that $\Psi^{-1}(\eta,\tau) = (\eta, \tau + \grad^ff(\eta,\tau) \eta^2/2 - f(\eta,\tau)\eta)$.
	Since $f,\grad^ff\in W^{1,p}_\loc(\R^2)$, we obtain $\Psi^{-1}\in W^{1,p}_\loc(\R^2;\R^2)$.
\end{proof}

\section{Consequences of the second variation}\label{sec09022317}

Robert Young proved in \cite[Theorem 1.3]{zbMATH07541296} that, if the intrinsic graph $\Gamma_f$ of $f$ is area minimizing and ruled by horizontal straight lines (i.e., isometric embeddings of $\R$), then $\Gamma_f$ is a vertical plane.
In this section, we study graphs $\Gamma_f$ that are ruled by horizontal straight lines,
and we want deduce that $\Gamma_f$ is a vertical plane from the stability of $f$, not the minimality;
see Proposition~\ref{prop669a6320} below.
Minimality clearly implies stability, but the converse may not hold.

\begin{lemma}\label{lem04061138}
	Let $f\in W^{1,q}_\loc(\R^2)$ be a function with $p$-bi-Sobolev $f$-Lagrangian homeomorphism 
	$\Psi:\R^2\to\R^2$, $\Psi(t,\zeta) = (t, \chi(t,\zeta))$, with
	\[
	\chi(t,\zeta) = \frac{a(\zeta)}{2} t^2 + b(\zeta) t + \zeta .
	\]
	Assume that (cfr.~\eqref{eq67f239dd})
	\begin{equation}\label{eq66993851}
	\frac{p^2}{(p-1)(p-2)} < q < \infty .
	\end{equation}
	Then:
	\begin{enumerate}
	\item
	For all $\zeta_1,\zeta_2\in\R$, either $a(\zeta_1)=a(\zeta_2)$ and $b(\zeta_1) = b(\zeta_2)$,
 	or $2 \big(a(\zeta_1)-a(\zeta_2)\big) (\zeta_1-\zeta_2) > \big(b(\zeta_1)-b(\zeta_2)\big)^2 $;
	\item
	For a.e.~$\zeta\in\R$ we have either $a'(\zeta)= b'(\zeta)=0$, or $2a'(\zeta)> b'(\zeta)^2$.
	\end{enumerate}
\end{lemma}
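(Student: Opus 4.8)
The plan is to exploit the geometric meaning of the hypotheses: the curves $t\mapsto\Psi(t,\zeta)=(t,\chi(t,\zeta))$ are the Lagrangian trajectories, and here each such trajectory is the parabola $t\mapsto \tfrac12 a(\zeta)t^2+b(\zeta)t+\zeta$; the fact that $\Psi$ is a homeomorphism means that distinct trajectories never meet, which is exactly a sign condition on the difference of two such parabolas.

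For item (1) I would fix $\zeta_1\ne\zeta_2$, say $\zeta_1>\zeta_2$, and set
\[
g(t):=\chi(t,\zeta_1)-\chi(t,\zeta_2)=\tfrac12\big(a(\zeta_1)-a(\zeta_2)\big)t^2+\big(b(\zeta_1)-b(\zeta_2)\big)t+(\zeta_1-\zeta_2).
\]
Since $\chi(t,\cdot)$ is non-decreasing by Definition~\ref{def67f25313}(ii) and injective because $\Psi$ is a homeomorphism, it is strictly increasing, so $g(t)>0$ for every $t$ (alternatively, $g$ never vanishes by injectivity and $g(0)=\zeta_1-\zeta_2>0$). A polynomial of degree at most two that is strictly positive on all of $\R$ is either an affine function with zero slope or a genuine quadratic with positive leading coefficient and negative discriminant. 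The first case forces $a(\zeta_1)=a(\zeta_2)$ and then $b(\zeta_1)=b(\zeta_2)$; the second gives $a(\zeta_1)>a(\zeta_2)$ together with $\big(b(\zeta_1)-b(\zeta_2)\big)^2-2\big(a(\zeta_1)-a(\zeta_2)\big)(\zeta_1-\zeta_2)<0$, which is the claimed strict inequality. This settles (1).

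For item (2) I would first pass to the infinitesimal version through the Jacobian. Since $J_\Psi=\de_\zeta\chi>0$ almost everywhere and, wherever $a,b$ are differentiable, $\de_\zeta\chi(t,\zeta)=\tfrac12 a'(\zeta)t^2+b'(\zeta)t+1=:P_\zeta(t)$, Fubini's theorem gives, for a.e.~$\zeta$, that $P_\zeta(t)>0$ for a.e.~$t$; recalling that $a,b\in W^{1,p}_\loc(\R)$ (Remark~\ref{rem669a408d}) are locally absolutely continuous and hence differentiable a.e., for such $\zeta$ the polynomial $P_\zeta$ is nonnegative on all of $\R$. Inspecting $P_\zeta$ exactly as in (1) yields, for a.e.~$\zeta$, either $a'(\zeta)=b'(\zeta)=0$, or $a'(\zeta)>0$ with $2a'(\zeta)\ge b'(\zeta)^2$.

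The main obstacle is to upgrade this last inequality from $\ge$ to the strict $>$; neither passing to the limit in (1) nor the mere positivity of $J_\Psi$ detects the borderline case, since a quadratic with a double root is still positive almost everywhere. Here I would use the integrability~\eqref{eq66979337}, namely $1/\de_\zeta\chi\in L^{\alpha}_\loc$ with $\alpha=p-1>1$ (as $p>2$ by~\eqref{eq66993851}). Arguing by contradiction, suppose the set $E=\{\zeta:\ a'(\zeta)>0,\ 2a'(\zeta)=b'(\zeta)^2\}$ has positive measure. On $E$ the polynomial $P_\zeta$ has a double root $t_\zeta=-b'(\zeta)/a'(\zeta)$, so $P_\zeta(t)=\tfrac12 a'(\zeta)(t-t_\zeta)^2$ and $1/\de_\zeta\chi(t,\zeta)=2/\big(a'(\zeta)(t-t_\zeta)^2\big)$. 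Passing to a positive-measure subset $E'\subset E$ on which $t_\zeta$ is bounded, say $|t_\zeta|\le M$, I would integrate $|1/\de_\zeta\chi|^{\alpha}$ over $K=[-M-1,M+1]\times E'$: the inner $t$-integral of $|t-t_\zeta|^{-2\alpha}$ diverges because $2\alpha>1$ and the singularity lies in the interior of the interval, so $\int_K|1/\de_\zeta\chi|^\alpha=+\infty$, contradicting~\eqref{eq66979337}. Hence $|E|=0$, which is precisely the strict dichotomy in (2).
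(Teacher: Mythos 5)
Your proof is correct, and for the decisive step --- excluding the borderline case $2a'(\zeta)=b'(\zeta)^2>0$ --- it takes a genuinely different route from the paper. For part (1) the paper simply cites \cite[Lemma~3.3]{MR3753176}; your direct argument (strict monotonicity of $\zeta\mapsto\chi(t,\zeta)$ forces the difference of two trajectories, a polynomial of degree $\le 2$, to be everywhere positive, hence either constant or with strictly negative discriminant) is exactly the content of that reference. For part (2) the paper works with the logarithmic derivative: it notes that $\de_\tau f\circ\Psi=\de_t\de_\zeta\chi/\de_\zeta\chi=P_\zeta'/P_\zeta$ is locally $s$-integrable with $s\ge1$ --- this is where the compatibility condition \eqref{eq66993851} between $p$ and $q$ enters, through Proposition~\ref{prop66964d64} --- and then invokes Lemma~\ref{lem688b709b} to conclude that $P_\zeta$ has no real root at all for a.e.\ $\zeta$, which gives the strict discriminant inequality in one stroke. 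You instead first reduce, via $J_\Psi>0$ a.e.\ and Fubini, to the sole possibility of a double root, and then rule it out by integrating $|1/\de_\zeta\chi|^{\alpha}=|1/P_\zeta|^{\alpha}$ over a product set and contradicting \eqref{eq66979337}; the measurable selection of a subset where $t_\zeta=-b'/a'$ is bounded is handled correctly, and $2\alpha=2(p-1)>1$ is all that is needed for the divergence. What each approach buys: yours uses only the bi-Sobolev property of $\Psi$ (namely $p>2$, which yields \eqref{eq66979337}) and never the $q$-integrability of $\de_\tau f$, so hypothesis \eqref{eq66993851} is essentially idle in your part (2) beyond encoding $p>2$; the paper's argument genuinely uses \eqref{eq66993851}, but it eliminates simple and double roots simultaneously and recycles the composition machinery already set up for the first and second variation. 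Both are sound; I see no gap.
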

\begin{remark}
	Notice that, for $q>1$ fixed, the condition~\eqref{eq66993851} is satisfied for $p$ large enough.
	See also Lemma~\ref{lem67f24a18}.
\end{remark}
\begin{proof}[Proof of Lemma~\ref{lem04061138}]
	The first part of the lemma is contained in \cite[Lemma~3.3]{MR3753176}.
	Before proving the second part, notice that $2a'(\zeta) \ge b'(\zeta)^2$ follows directly from the inequality $2 \big(a(\zeta_1)-a(\zeta_2)\big) (\zeta_1-\zeta_2) \ge \big(b(\zeta_1)-b(\zeta_2)\big)^2 $, which holds for every $\zeta_1,\zeta_2\in\R$.
	
	In order to show the second part of the lemma, notice that
	\[
	\de_\tau f( t,\chi(t,\zeta) ) = \frac{ \de_t\de_\zeta \chi(t,\zeta) }{ \de_\zeta\chi(t,\zeta) } .
	\]
	Since $\de_\tau f$ is locally $q$-integrable and since $\Psi(t,\zeta) = (t,\chi(t,\zeta))$ is $p$-bi-Sobolev homeomorphism, then $(t,\zeta)\mapsto \de_\tau f( t,\chi(t,\zeta) )$ is locally $s$-integrable by Proposition~\ref{prop66964d64}, where $s>1$ because of the strict inequality in~\eqref{eq66993851}.
	In particular, for almost every $\zeta$, the function $t\mapsto \frac{ \de_t\de_\zeta \chi(t,\zeta) }{ \de_\zeta\chi(t,\zeta) }$ is locally $s$-integrable in $t$.
	
	It follows that, for a.e.~$\zeta\in\R$, the polynomial $\de_\zeta\chi(t,\zeta) = a'(\zeta)t^2/2 + b'(\zeta) t+ 1$ does not have roots.
	Indeed, either $t\mapsto\de_\zeta\chi(t,\zeta)$ is constant or not.
	If $t\mapsto\de_\zeta\chi(t,\zeta)$ is constant, then it is 1, hence it does not have roots.
	If $t\mapsto\de_\zeta\chi(t,\zeta)$ is not constant, by Lemma~\ref{lem688b709b}, whenever $\zeta$ is such that $t\mapsto \de_\zeta\chi(t,\zeta)$ has a root, then there is $k_\zeta\in\Z$ such that 
	\[
	\int_{k_\zeta}^{k_\zeta+1}  \left|\frac{ \de_t\de_\zeta \chi(t,\zeta) }{ \de_\zeta\chi(t,\zeta) }\right|^s \did t = \infty .
	\]
	Since, for every $j,k\in\Z$ we have
	\[
	\int_j^{j+1} \int_k^{k+1}  \left|\frac{ \de_t\de_\zeta \chi(t,\zeta) }{ \de_\zeta\chi(t,\zeta) }\right|^s \did t \did\zeta < \infty ,
	\]
	it follows that for every $j,k\in\Z$ the set $\{\zeta\in[j,j+1]:k_\zeta =k\}$ has zero measure.
\end{proof}

\begin{lemma}\label{lem688b709b}
	Let $P:\R\to\R$ be a non-constant polynomial of degree 1 or 2.
	For every $a,b\in\R$, $a<b$, if $P$ has a root in $[a,b]$, then, for every $s\ge1$,
	\begin{equation}\label{eq688b6f8f}
		\int_a^b \left|\frac{ P'(t) }{ P(t) }\right|^s \did t = \infty .
	\end{equation}
\end{lemma}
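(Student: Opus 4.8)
The plan is to exploit the fact that the logarithmic derivative $P'/P$ has a simple pole at every root of $P$, so that the integrand blows up at least like $|t-t_0|^{-s}$ near such a root, which is non-integrable precisely when $s\ge1$.

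First I would fix a root $t_0\in[a,b]$ of $P$ and write $P(t)=(t-t_0)^m R(t)$, where $m$ is the multiplicity of $t_0$ and $R$ is a polynomial with $R(t_0)\ne0$; since $\deg P\in\{1,2\}$ we have $m\in\{1,2\}$, but the argument will not use this. Differentiating and dividing by $P$ gives, wherever $P(t)\ne0$, the identity
\[
\frac{P'(t)}{P(t)} = \frac{m}{t-t_0} + \frac{R'(t)}{R(t)} .
\]
The decisive point is that the second term is harmless: since $R(t_0)\ne0$ and $R$ is a polynomial, $R'/R$ is continuous, hence bounded, on a neighborhood of $t_0$, and so it cannot cancel the blow-up of the first term.

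Next I would make this quantitative. Using continuity of $R'/R$ near $t_0$, I would pick $\delta>0$ so small that $|R'/R|\le M$ on $(t_0-\delta,t_0+\delta)$ for some constant $M$, and, shrinking $\delta$ if necessary, so that $m/|t-t_0|\ge 2M$ for $0<|t-t_0|<\delta$. Then on the punctured $\delta$-neighborhood one has $|P'(t)/P(t)|\ge \frac{m}{2|t-t_0|}\ge\frac{1}{2|t-t_0|}$. Since $a<b$ and $t_0\in[a,b]$, at least one of the one-sided intervals $[t_0,t_0+\delta']$ or $[t_0-\delta',t_0]$ is contained in $[a,b]$ for some $\delta'\in(0,\delta)$; restricting the integral to that interval and invoking the elementary divergence $\int_0^{\delta'} x^{-s}\did x=\infty$, valid for every $s\ge1$, yields
\[
\int_a^b \left|\frac{P'(t)}{P(t)}\right|^s \did t = \infty .
\]

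I do not expect any genuine obstacle: the only subtleties are that the residue of the pole equals the multiplicity $m\ge1$ (so it never vanishes), and the case where $t_0$ is an endpoint of $[a,b]$, which is covered by the one-sided interval --- available precisely because $a<b$. Both are routine, and the degree hypothesis serves only to place the lemma in the context where it is applied.
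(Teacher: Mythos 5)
Your proof is correct and follows essentially the same route as the paper's: both arguments reduce to the observation that $P'/P$ has a simple pole with nonzero residue (the multiplicity) at each root, so that $|P'/P|^s\gtrsim |t-t_0|^{-s}$ is non-integrable for $s\ge1$. The paper simply writes out the explicit factorizations $P(t)=A(t-B)$ and $P(t)=A(t-\alpha)(t-\beta)$ in the two degree cases and reads off the divergence, whereas you phrase the same computation via the decomposition $P'/P=m/(t-t_0)+R'/R$ and supply the quantitative lower bound near the root; the content is identical.
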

\begin{proof}
	If $P$ has degree 1, then $P(t) = A(t-B)$ for some $A,B\in\R$ with $A\neq0$, so $\frac{ P'(t) }{ P(t) } = \frac{1}{t-B}$.
	Therefore, we have~\eqref{eq688b6f8f} whenever $B\in[a,b]$.

	If $P$ has degree 2 and has a real root, then $P(t) = A(t-\alpha) (t-\beta)$ for some $A,\alpha,\beta\in\R$ with $A\neq0$.
	So, $\frac{ P'(t) }{ P(t) } = \frac{2t - \alpha-\beta}{(t-\alpha) (t-\beta)}$ and we obtain~\eqref{eq688b6f8f} as soon as $[a,b]$ contains $\alpha$ or $\beta$.
\end{proof}

\begin{lemma}\label{lem04061131}
	Let $f\in W^{1,q}_\loc(\R^2)$ be a function with $p$-bi-Sobolev $f$-Lagrangian homeomorphism 
	$\Psi:\R^2\to\R^2$, $\Psi(t,\zeta) = (t, \chi(t,\zeta))$ 
	as in Lemma~\ref{lem04061138}.
	Assume also that~\eqref{eq67f25035} holds, that is,
	\begin{equation}\label{eq669959c6}
	s := \frac{ pq(p-2)}{p^2+q(p-2)}  > \frac{2q}{q-2} =: \beta .
	\end{equation}
	
	If $f$ is stable, then, for all $\theta\in C^\infty_c(\R^2)$,
	\begin{equation}\label{eq06071139}
	\int_{\R^2} 
		\left[ (\de_t\theta)^2 \frac{a't^2/2 + b't + 1}{(1+a^2)^{3/2}} 
	- \theta^2 \frac{2a'-b'^2}{(a't^2/2 + b't + 1)(1+a^2)^{3/2}}  
	\right]
		\did t\did \zeta
	\ge 0 ,
	\end{equation}
	where $a$, $a'$ and $b'$ are functions of $\zeta$, while $\theta$ is a function of $(t,\zeta)$.
\end{lemma}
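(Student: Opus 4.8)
The plan is to recognize the asserted inequality~\eqref{eq06071139} as nothing but the stability inequality $II_f(\phi)\ge 0$ rewritten in the Lagrangian coordinates $(t,\zeta)$ supplied by $\Psi$. Concretely, given $\theta\in C^\infty_c(\R^2)$ I would set $\phi := \theta\circ\Psi^{-1}$ and establish two things: first, that $\phi$ is an admissible competitor for the extended second variation; and second, that after the change of variables induced by $\Psi$ the quantity $II_f(\phi)$ equals exactly the left-hand side of~\eqref{eq06071139}.

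For admissibility, I would apply Proposition~\ref{prop66964d64} to the homeomorphism $\Psi^{-1}$: since $\theta\in C^\infty_c\subset W^{1,q}_\loc\cap C^0$ and $\Psi,\Psi^{-1}\in W^{1,p}_\loc$ with positive Jacobian, the composition operator $T_{\Psi^{-1}}$ places $\phi$ in $W^{1,s}_\loc(\R^2)\cap C^0(\R^2)$, with $s$ as in~\eqref{eq669959c6}. Because $\spt\phi = \Psi(\spt\theta)$ is compact and $s>\beta$ by~\eqref{eq669959c6}, we obtain $\phi\in W^{1,\beta}_c(\R^2)$. Lemma~\ref{lem66995925} then upgrades stability to the inequality $II_f(\phi)\ge 0$ for this $\phi$.

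The computational heart is the change of variables. Using that $\Psi$ satisfies the Lusin $(N)$ condition (Proposition~\ref{prop6697d3da}), I would apply Proposition~\ref{prop66977f11} with Jacobian $\de_\zeta\chi$, together with the chain rules~\eqref{eq66967f49} and~\eqref{eq66967fde}, to compute $(\grad^f\phi + \de_\tau f\,\phi)\circ\Psi = \de_t\theta + \frac{\de_\zeta\de_t\chi}{\de_\zeta\chi}\,\theta$, as well as $\grad^f f\circ\Psi = a$ and $\de_\tau(\phi^2)\circ\Psi = \de_\zeta(\theta^2)/\de_\zeta\chi$. Inserting the explicit derivatives $\de_t\chi = at+b$, $\de_\zeta\de_t\chi = a't+b'$ and $\de_\zeta\chi = a't^2/2 + b't + 1$ coming from the form of $\chi$, this turns $II_f(\phi)$ into
\[
\int_{\R^2}\frac{\big(\de_t\theta + \tfrac{a't+b'}{\de_\zeta\chi}\,\theta\big)^2}{(1+a^2)^{3/2}}\,\de_\zeta\chi\did t\did\zeta + \int_{\R^2}\frac{a}{\sqrt{1+a^2}}\,\de_\zeta(\theta^2)\did t\did\zeta .
\]

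Expanding the square, the cross term $\frac{a't+b'}{(1+a^2)^{3/2}}\de_t(\theta^2)$ is integrated by parts in $t$ (for a.e.\ fixed $\zeta$ its coefficient is a polynomial in $t$), producing $-\frac{a'}{(1+a^2)^{3/2}}\theta^2$; and the last integral is integrated by parts in $\zeta$, which again produces $-\frac{a'}{(1+a^2)^{3/2}}\theta^2$ since $\de_\zeta\frac{a}{\sqrt{1+a^2}} = \frac{a'}{(1+a^2)^{3/2}}$. Collecting the $\theta^2$-terms and using the key algebraic identity $(a't+b')^2 - 2a'\,\de_\zeta\chi = b'^2 - 2a'$ yields precisely the integrand of~\eqref{eq06071139}, so that $0\le II_f(\phi)$ is the claim. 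The main obstacle is rigour rather than novelty: I must justify the change of variables and the two integrations by parts using only the Sobolev regularity $a,b\in W^{1,p}_\loc\cap C^0$ and $\phi\in W^{1,\beta}_c$. These steps are safe because $|a/\sqrt{1+a^2}|\le 1$ is bounded, $\de_\zeta\chi>0$ almost everywhere, and all coefficients appearing carry the integrability recorded in~\eqref{eq66967f49}, \eqref{eq66967fde} and~\eqref{eq66993851}; where convenient one may first mollify $a,b$ and work with $\theta$ smooth, passing to the limit exactly as in the proof of Theorem~\ref{thm6698ea1a}.
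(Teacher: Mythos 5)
Your proposal is correct and follows essentially the same route as the paper: transfer $\theta$ to $\phi=\theta\circ\Psi^{-1}$, use Proposition~\ref{prop66964d64} with $s>\beta$ and Lemma~\ref{lem66995925} to get $II_f(\phi)\ge0$, then change variables via Propositions~\ref{prop6697d3da} and~\ref{prop66977f11} and the chain rules~\eqref{eq66967f49}--\eqref{eq66967fde}, expand the square, integrate by parts in $t$ and in $\zeta$, and use the identity $(a't+b')^2-2a'\,\de_\zeta\chi=b'^2-2a'$. This is exactly the computation in the paper's proof of Lemma~\ref{lem04061131}.
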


\begin{remark}
	By Lemma~\ref{lem67f24a18},~\eqref{eq669959c6} is satisfied for $p$ large enough
	as soon as $q>4$.
\end{remark}

\begin{proof}[Proof of Lemma~\ref{lem04061131}]
	Let $\theta\in C^\infty_c(\R^2)$ and set $\phi = \theta\circ\Psi^{-1}$.
	By Proposition~\ref{prop66964d64}, $T_{\Psi^{-1}}$ is continuous 
	$W^{1,q}_\loc(\R^2)\cap C^0(\R^2) \to W^{1,s}_\loc(\R^2)\cap C^0(\R^2)$.
	Since $s>\beta$ by~\eqref{eq669959c6},
	 then $W^{1,s}_\loc(\R^2)\cap C^0(\R^2) \subset W^{1,\beta}_\loc(\R^2)\cap C^0(\R^2)$.
	Therefore, $\phi=T_{\Psi^{-1}}\theta \in W^{1,\beta}_c(\R^2)$.
	By Lemma~\ref{lem66995925} and the hypothesis of $f$ being stable,
	$II_f(\phi) \ge 0$.
	
	With the help of Propositions~\ref{prop6697d3da} and~\ref{prop66977f11},
	and the formulas~\eqref{eq66967fde} and~\eqref{eq66967f49},
	one eventually obtains~\eqref{eq06071139}:
	\begin{multline*}
	\!\!\!\!II_f(\phi)\!
	=\!\!\! \int_{\R^2} \!\! \left(\frac{
		\left(\de_t\theta+\theta\frac{a't+b'}{a't^2/2 + b't + 1}\right)^2 }{(1+a^2)^{3/2}} + 
		\frac{
			\frac{\de_\zeta(\theta^2)}{a't^2/2 + b't + 1} a
			}{(1+a^2)^{1/2}}
		\right)\!\!
		(a't^2/2 + b't + 1)
		\did t \did\zeta \\
	= \int_{\R^2} \bigg[ (\de_t\theta)^2 \frac{a't^2/2 + b't + 1}{(1+a^2)^{3/2}}  
		+ \theta^2 \frac{(a't+b')^2}{(a't^2/2 + b't + 1)(1+a^2)^{3/2}}  +\hfill\\ \hfill
		+ \de_t(\theta^2) \frac{a't+b'}{(1+a^2)^{3/2}}
		+ \de_\zeta(\theta^2)\frac{a}{(1+a^2)^{1/2}} \bigg]\did t\did \zeta \\
	= \int_{\R^2} \bigg[ (\de_t\theta)^2 \frac{a't^2/2 + b't + 1}{(1+a^2)^{3/2}}  
		+ \theta^2 \frac{(a't+b')^2}{(a't^2/2 + b't + 1)(1+a^2)^{3/2}}  + \hfill\\\hfill
		- \theta^2 \frac{a'}{(1+a^2)^{3/2}}
		- \theta^2 \frac{a'}{(1+a^2)^{3/2}} \bigg] \did t\did \zeta \\
	= \int_{\R^2} \bigg[ (\de_t\theta)^2 \frac{a't^2/2 + b't + 1}{(1+a^2)^{3/2}}  
		+ \theta^2 \frac{b'^2-2a'}{(a't^2/2 + b't + 1)(1+a^2)^{3/2}}  
		\bigg]\did t\did \zeta . \hfill
	\end{multline*}
	See also \cite[Lemma 5.4]{MR3984100}. 
\end{proof}

The following lemma is proved in \cite[p.45]{MR2333095}.
\begin{lemma}\label{lem06070850}
	Let $A,B\in\R$ be such that $B^2\le2A$ and set $h(t):=At^2/2 + Bt + 1$.
	If
	\begin{equation*}
	\int_{\R} \phi'(t)^2 h(t) \did t 
	\ge (2A-B^2) \int_{\R} \phi(t)^2 \frac1{h(t)} \did t
	\qquad
	\forall \phi\in C^1_c(\R),
	\end{equation*}
	then $B^2=2A$.
\end{lemma}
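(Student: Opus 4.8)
The plan is to prove the contrapositive-flavored statement by analyzing what the hypothesized inequality forces on the coefficients $A$ and $B$. We are given $B^2 \le 2A$ and want to conclude $B^2 = 2A$. So suppose for contradiction that $B^2 < 2A$, which means the coefficient $2A - B^2$ is strictly positive and $h(t) = At^2/2 + Bt + 1$ is a genuine (nondegenerate) quadratic. First I would analyze the quadratic $h$: since $B^2 < 2A$, the discriminant of $h$ (viewed as $\frac{A}{2}t^2 + Bt + 1$) is $B^2 - 4\cdot\frac{A}{2}\cdot 1 = B^2 - 2A < 0$, so $h$ has no real roots and, because $A>0$, we have $h(t) > 0$ for all $t\in\R$ with $h(t)\to+\infty$ as $|t|\to\infty$. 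This nondegeneracy is exactly what I want to exploit: the weighted functional inequality should fail for a cleverly chosen test function, contradicting the hypothesis.

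The key step is to find an explicit family of test functions $\phi\in C^1_c(\R)$ that violates the stated inequality when $2A - B^2 > 0$. The natural approach is to interpret the inequality as a statement that a one-dimensional Schrödinger-type (or Sturm–Liouville) operator with weight $h$ is nonnegative, and to show that when $2A > B^2$ this operator has negative spectrum. Concretely, I would look for a function $u$ solving (or approximately solving) the associated Euler–Lagrange equation $-\frac{d}{dt}\big(h(t)\,u'(t)\big) = (2A - B^2)\frac{1}{h(t)} u(t)$, and try to produce a compactly supported test function with strictly negative Rayleigh quotient $\int \phi'^2 h - (2A-B^2)\int \phi^2/h$. A clean way is to exhibit an explicit positive solution $u$ of the homogeneous equation and then use the substitution $\phi = u\psi$ to reduce the quadratic form; if $u$ can be taken bounded and not in the natural energy space, one builds truncations $\phi_R$ that drive the Rayleigh quotient below zero.

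The main obstacle I expect is producing the right explicit solution or the right scaling family, because the weight $h$ grows quadratically while the potential $1/h$ decays like $t^{-2}$, so this is a borderline (critical) case reminiscent of Hardy-type inequalities where the constant is sharp. The quadratic growth of $h$ and quadratic decay of $1/h$ conspire so that the natural test-function scalings give a Rayleigh quotient that is exactly balanced at the threshold $B^2 = 2A$; the whole point is that below threshold one can still extract a negative value. I would therefore guess an explicit solution motivated by the structure of $h$: since $h'(t) = At + B$, a function built from $h^{\gamma}$ or from the primitive $\int dt/h(t)$ (which is an arctangent once $h$ has no real roots) is a good candidate, and I would compute the quadratic form on $\phi = h^{-1/2}\eta$ or a logarithmic cutoff and optimize. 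Once I locate a $\phi$ with $\int \phi'^2 h < (2A-B^2)\int \phi^2/h$, this contradicts the hypothesis, forcing $2A - B^2 = 0$, i.e. $B^2 = 2A$, completing the proof.

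Finally, after the core estimate I would record the degenerate case $B^2 = 2A$ only to note that it is consistent (so no contradiction arises there), confirming that the conclusion $B^2 = 2A$ is the exact dividing line. The remaining work is routine: verify that the chosen $\phi_R$ are genuinely in $C^1_c(\R)$, that all integrals converge, and that the negative-Rayleigh-quotient estimate survives the truncation in the limit $R\to\infty$.
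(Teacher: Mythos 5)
Your overall strategy---assume $2A>B^2$ and exhibit a test function with negative Rayleigh quotient---is the right one (the paper itself only cites \cite[p.~45]{MR2333095} for this lemma, and the argument there is of exactly this type). However, your proposal stops at the point where the actual work begins: you never produce the violating test function, and the heuristics you offer for finding one point in a misleading direction. You describe the problem as a ``borderline (critical) case reminiscent of Hardy-type inequalities where the constant is sharp,'' suggesting a delicate logarithmic-cutoff construction. In fact the situation is not critical at all. The key step, which you mention in passing (``the primitive $\int dt/h(t)$, which is an arctangent'') but do not exploit, is the change of variables $s=u(t):=\int_0^t h(\sigma)^{-1}\,d\sigma$. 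Since $2A>B^2$ forces $A>0$ and $h>0$ on $\R$, the map $u$ is an increasing diffeomorphism of $\R$ onto a \emph{bounded} open interval of length
\[
L=\int_\R\frac{dt}{h(t)}=\frac{2\pi}{\sqrt{2A-B^2}}.
\]
Writing $\psi(s)=\phi(t)$, one gets $\int_\R \phi'^2 h\,dt=\int_0^L\psi'^2\,ds$ and $\int_\R \phi^2 h^{-1}\,dt=\int_0^L\psi^2\,ds$, so the hypothesized inequality becomes the Poincar\'e inequality $\int_0^L\psi'^2\ge(2A-B^2)\int_0^L\psi^2$ for all $\psi\in C^1_c$ of the interval. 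The sharp constant there is $(\pi/L)^2=(2A-B^2)/4$, which is strictly smaller than $2A-B^2$; hence the inequality fails (by a fixed factor of $4$, not marginally) for, say, $\psi$ close to $\sin(\pi s/L)$, giving the contradiction.

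So the gap is concrete: the proof as written is a search plan, and the candidate constructions you propose ($\phi=h^{-1/2}\eta$, logarithmic cutoffs, truncations $\phi_R$ with $R\to\infty$) are aimed at a non-compact critical Hardy problem that is not the one at hand. Without either the substitution above or an equivalent explicit $\phi$ together with the verification that $\int\phi'^2h<(2A-B^2)\int\phi^2/h$, the conclusion $B^2=2A$ is not established. Incorporating the change of variables turns your outline into a complete and short proof.
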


\begin{proposition}\label{prop669a6320}
	Let $f\in W^{1,q}_\loc(\R^2)$ be a function with $p$-bi-Sobolev $f$-Lagrangian homeomorphism 
	$\Psi:\R^2\to\R^2$, $\Psi(t,\zeta) = (t, \chi(t,\zeta))$, with
	\[
	\chi(t,\zeta) = \frac{a(\zeta)}{2} t^2 + b(\zeta) t + \zeta .
	\]
	Assume both~\eqref{eq66993851} and~\eqref{eq669959c6}.
	If $f$ is stable, then $a'=b'=0$ a.e.~and thus the intrinsic graph of $f$ is a vertical plane.
\end{proposition}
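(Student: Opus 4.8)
The plan is to combine the pointwise algebraic constraint from Lemma~\ref{lem04061138}(2) with the integral stability inequality from Lemma~\ref{lem04061131}, and then localize in the $\zeta$-variable so that Lemma~\ref{lem06070850} can be applied. Fix a value $\zeta_0$ at which the functions $a,b$ are differentiable and the conclusion of Lemma~\ref{lem04061138}(2) holds; by that lemma, at almost every such point we have either $a'(\zeta_0)=b'(\zeta_0)=0$, or $2a'(\zeta_0)>b'(\zeta_0)^2$. I would like to rule out the second alternative. The key observation is that the stability inequality~\eqref{eq06071139} holds for \emph{all} $\theta\in C^\infty_c(\R^2)$, so in particular for test functions of the form $\theta(t,\zeta)=\psi(t)\,\rho(\zeta)$ that concentrate in $\zeta$ near $\zeta_0$.

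The main step is a concentration/blow-up argument in $\zeta$. Taking $\theta(t,\zeta)=\psi(t)\rho_n(\zeta)$ where $\rho_n$ is an approximate identity concentrating at $\zeta_0$, and using the continuity of $a$ (hence of the coefficients $a'(\zeta)t^2/2+b'(\zeta)t+1$ only in an averaged, $L^1$ sense since $a',b'$ are merely integrable) one expects~\eqref{eq06071139} to pass, after dividing by $\int\rho_n^2$, to the one-variable inequality
\begin{equation*}
\int_\R (\psi'(t))^2\,h_{\zeta_0}(t)\did t \ge (2a'(\zeta_0)-b'(\zeta_0)^2)\int_\R \psi(t)^2\,\frac{1}{h_{\zeta_0}(t)}\did t
\end{equation*}
for almost every $\zeta_0$ and all $\psi\in C^1_c(\R)$, where $h_{\zeta_0}(t)=a'(\zeta_0)t^2/2+b'(\zeta_0)t+1$ and the factors $(1+a(\zeta_0)^2)^{-3/2}$ cancel between the two sides. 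Setting $A=a'(\zeta_0)$, $B=b'(\zeta_0)$, which satisfy $B^2\le 2A$ by Lemma~\ref{lem04061138}(2), this is precisely the hypothesis of Lemma~\ref{lem06070850}. That lemma then forces $B^2=2A$, i.e.\ $2a'(\zeta_0)=b'(\zeta_0)^2$, at almost every $\zeta_0$.

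Now I would feed this equality back into the dichotomy of Lemma~\ref{lem04061138}(2): at almost every $\zeta$ we have either $a'=b'=0$, or the \emph{strict} inequality $2a'>b'^2$; the conclusion $2a'=b'^2$ just obtained is incompatible with the strict alternative, so the only surviving possibility is $a'(\zeta)=b'(\zeta)=0$ for a.e.\ $\zeta$. Hence $a$ and $b$ are constant. Unwinding the explicit form of $\Psi^{-1}$ computed in Corollary~\ref{cor669a6101}, namely $\grad^ff(\eta,\tau)=a$ and $f(\eta,\tau)=a\eta+b$ as functions on $\R^2$ after composing with $\Psi^{-1}$, one reads off that $f$ is affine of the form $f(\eta,\tau)=a\eta+b$, whose intrinsic graph is a vertical plane by Remark~\ref{rem669a6cca}.

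The hard part will be making the concentration argument in $\zeta$ rigorous, since $a'$ and $b'$ are only $L^p_\loc$ (indeed the coefficient $h_\zeta$ need not be continuous in $\zeta$), so the passage from the two-variable inequality~\eqref{eq06071139} to the one-variable inequality must be justified at Lebesgue points of $a'$ and $b'$ and requires controlling the $\zeta$-dependent integrands under the $L^1$-mollification of $\theta^2$. The integrability hypothesis~\eqref{eq669959c6} (which gives $s>\beta$ and ultimately $s>1$ for the coefficient $2a'-b'^2$ divided by $h_\zeta$) is exactly what guarantees that these integrals are finite and that the concentrating test functions $\psi(t)\rho_n(\zeta)$ lie in the admissible class, so that no term blows up in the limit. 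Once the localization is set up at Lebesgue points, the remaining algebra — the cancellation of the $(1+a^2)^{3/2}$ factors and the invocation of Lemma~\ref{lem06070850} — is routine.
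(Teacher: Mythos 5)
Your proposal is correct and follows essentially the same route as the paper, which proves this proposition precisely by applying Lemma~\ref{lem04061138}, Lemma~\ref{lem04061131} and Lemma~\ref{lem06070850} in sequence, with the localization-in-$\zeta$ at Lebesgue points of $a'$ and $b'$ (the step you flag as the ``hard part'') being exactly the argument the paper delegates to the proof of Theorem~5.1 in \cite{MR3984100}. The cancellation of the $(1+a^2)^{3/2}$ factors, the use of the non-strict inequality $2a'\ge b'^2$ to meet the hypothesis of Lemma~\ref{lem06070850}, and the final confrontation with the strict dichotomy are all as in the paper's intended argument.
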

\begin{proof}
	We can apply in sequence Lemma~\ref{lem04061138}, Lemma~\ref{lem04061131} and finally Lemma~\ref{lem06070850},
	arguing as in~\cite[Proof of Theorem 5.1 at p. 141]{MR3984100}.
\end{proof}

\begin{proof}[Proof of Theorem~\ref{thm669a693f}]
	Notice that, by Morrey Embedding Theorem, $f\in C^0(\R^2)$.
	Being $f$ is stationary, we apply Corollary~\ref{cor669a6101}:
	we obtain a $f$-Lagrangian homeomorphism $\Psi:\R^2\to\R^2$ of the form $\Psi(t,\zeta) = (t,\chi(t,\zeta))$
	with 
	\[
	\chi(t,\zeta) = \frac{a(\zeta)}{2} t^2 + b(\zeta) t + \zeta ,
	\]
	for some $a,b\in W^{1,p}_\loc(\R)\cap C^0(\R)$.
	Moreover, the map $\Psi$ is $p$-bi-Sobolev for every $p\ge1$.
	By Lemma~\ref{lem67f24a18}, the hypothesis $q>4$ ensures that we can choose $p$ so that both~\eqref{eq66993851} and~\eqref{eq669959c6} are verified,
	and thus we can apply Proposition~\ref{prop669a6320}.
	We obtain that $a'=b'=0$.
	We conclude that $f(\eta,\tau) = a\eta + b$ is affine.
\end{proof}

\section{Examples}\label{sec68009c6b}

\subsection{A Sobolev function that is not intrinsic Lipschitz}
We show that our Theorem~\ref{thm669a693f} is indeed a generalization of our previous result~\cite[Theorem 1.1]{MR3984100} for intrinsic Lipschitz functions.
In other words, we want to show that there are functions $f\in W^{1,p}_\loc(\R^2)$ for all $p>1$, satisfying~\eqref{eq669a34fb} that are not intrinsic Lipschitz.

If $f:\R^2\to\R$ is a function of the form $f(\eta,\tau) =g(\tau)$ for some absolutely continuous $g:\R\to\R$, then $f$ is \emph{(locally) intrinsic Lipschitz} in the Heisenberg group
if and only if $\grad^ff(\eta,\tau) = g(\tau)g'(\tau)$ is (locally) bounded.
At the same time, such $f$ belongs to $W^{1,p}_\loc(\R^2)$ if and only if $g'\in L^p_\loc(\R)$, for each $p>1$.

Consider for instance
\[
g(\tau) = \begin{cases}
1+\tau\log(\tau) & \text{ if }\tau>0 , \\
0 & \text{ otherwise} .
\end{cases}
\]
Then $g'\in L^p_\loc(\R)$ for each $p>1$, but, 
if $\tau>0$,
we have 
\[
	\grad^ff(\eta,\tau) 
	= g(\tau)g'(\tau) 
	= (1+\tau\log(\tau))(\log(\tau)+1), 
\]
which is not bounded on $[0,1]$.
So, this is an example of a function $f\in W^{1,p}_\loc(\R^2)$ for all $p>1$ that satisfied~\eqref{eq669a34fb}, but which is not intrinsic Lipschitz.

\subsection{A known example that does not satisfy our hypothesis}
In \cite[Section 2]{MR2455341}, it has been shown that the locally intrinsic Lipschitz graph in $\HH$ of the function 
\begin{equation}\label{eq6800c406}
	f(\eta,\tau) = 2 \operatorname{sgn}(\tau) \sqrt{|\tau|} 
\end{equation}
is an area minimizing surface.
Note that~\cite{MR2455341} uses a different coordinate system for the Heisenberg group: we have taken it into account when writing the function $f$ in~\eqref{eq6800c406}.\footnote{
For sake of completeness, we explain the change of coordinates.
Consider the two group operations $*$ and $\bullet$ on $\R^3$ defined by
\begin{align*}
	(a,b,c) * (x,y,z) &:= ( a+x , b+y , c+z+\frac12 (ay-bx) ) , \\
	(a,b,c) \bullet (x,y,z) &:= ( a+x , b+y , c+z-2 (ay-bx) ) .
\end{align*}
The group operation $*$ is the one used in the present paper, see Section~\ref{sec06152116}, while $\bullet$ is used in~\cite{MR2455341}.
The map $\Phi:\R^3\to\R^3$, $\Phi(x,y,z):=(x,y,-z/4)$, is an isomorphism $\Phi:(\R^3,\bullet) \to (\R^3, *)$.
For $\alpha\in\R$, consider the two subsets of $\R^3$
\begin{align*}
	\Sigma^*_\alpha &:= \{(0,\eta,\tau)*(\alpha \operatorname{sgn}(\tau) \sqrt{|\tau|} , 0,0) : \eta,\tau\in\R\} \\
	&= \{ ( \alpha \operatorname{sgn}(\tau) \sqrt{|\tau|} , \eta, \tau - \frac12 \alpha \operatorname{sgn}(\tau) \sqrt{|\tau|}\eta ) : \eta,\tau\in\R\} , \\
	\Sigma^\bullet_\alpha &:= \{(0,\eta,\tau)\bullet(\alpha \operatorname{sgn}(\tau) \sqrt{|\tau|} , 0,0) : \eta,\tau\in\R\} \\
	&= \{ ( \alpha \operatorname{sgn}(\tau) \sqrt{|\tau|} , \eta, \tau + 2 \alpha \operatorname{sgn}(\tau) \sqrt{|\tau|}\eta ) : \eta,\tau\in\R\} .
\end{align*}
The example given by~\cite{MR2455341} is $\Sigma^\bullet_{-1}$, that is, the intrinsic graph in $(\R^3,\bullet)$ of the function $(\eta,\tau) \mapsto -\operatorname{sgn}(\tau) \sqrt{|\tau|}$.
One can check that $\Phi(\Sigma^\bullet_{-1}) = \Sigma^*_{2}$,
which is the intrinsic graph in $(\R^3,*)$ of the function given in~\eqref{eq6800c406}.
}

In particular, $f$ is stable, but not affine.
Our Theorem~\ref{thm669a693f} does not apply to this function $f$ because both $|\de_\tau f|^4$ is not locally integrable, and $\exp(\kappa|\de_\tau f|)$ is not locally integrable for every $\kappa>0$.
Note that, by~\cite[Theorem 1.3]{zbMATH07541296}, $\Gamma_f$ cannot be ruled by horizontal lines.

\appendix
\section{Consistency of conditions on exponents}\label{sec68009c83}

Along the paper, we need a number of conditions on the integrability exponents.
The following Lemma~\ref{lem67f24a18} ensures not only that those conditions are not contradictory, 
but also that, starting with the initial exponent $q>2$, 
we can pick exponents $p$, $r$, $s$, etc... along the way that satisfy all requirements.

Notice that $\frac{2q}{q-2}$ is not quite the Sobolev conjugate exponent of $q$, which would be $\frac{2q}{2-q}$ for $q\in[1,2)$.

\begin{lemma}\label{lem67f24a18}
	For every $q>1$ there is $\hat p>2$ such that, for $p>\hat p$,
	\begin{equation}\label{eq67f239dd}
		\frac{p^2}{(p-1)(p-2)} < q .
	\end{equation}
	For every $q>2$ there is $\hat p>2$ such that, for $p>\hat p$,
	\begin{equation}\label{eq67f239e4}
		\frac{ pq(p-2)}{p^2+q(p-2)} > q' = \frac{q}{q-1} .
	\end{equation}
	For every $q>4$ there is $\hat p>2$ such that, for $p>\hat p$,
	\begin{equation}\label{eq67f25035}
		\frac{ pq(p-2)}{p^2+q(p-2)} > \frac{2q}{q-2} > q' .
	\end{equation}
	For every $q>1$ there is $\hat p>2$ such that, for $p>\hat p$,
	if we set $s=\frac{ pq(p-2)}{p^2+q(p-2)}$ and $\alpha=p-1$, then 
	\begin{equation}\label{eq67f2503b}
		\frac{1}{p} + \frac{1}{s} + \frac{2}{\alpha} \le 1 .
	\end{equation}
\end{lemma}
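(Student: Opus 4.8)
The plan is to treat each of the four inequalities separately, but to exploit that all of them are governed by the behaviour of the relevant rational expressions as $p\to\infty$: in each case it suffices to identify a limit and to observe that the claimed (strict or non-strict) inequality already holds in the limit under the stated hypothesis on $q$; continuity then supplies a threshold $\hat p$ beyond which the inequality persists.

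The single computation underlying three of the four statements is
\[
\lim_{p\to\infty} \frac{pq(p-2)}{p^2+q(p-2)} = q ,
\]
obtained by dividing numerator and denominator by $p^2$. I would record this first, together with the elementary limits $\tfrac1p\to0$ and $\tfrac{2}{p-1}\to 0$.

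For~\eqref{eq67f239dd}, I would note $\frac{p^2}{(p-1)(p-2)}=\frac{p^2}{p^2-3p+2}\to 1$ as $p\to\infty$; since $q>1$ the strict inequality $1<q$ holds, so there is $\hat p$ with $\frac{p^2}{(p-1)(p-2)}<q$ for all $p>\hat p$. For~\eqref{eq67f239e4}, using $s\to q$ it suffices that $q>q'$, which for $q>1$ is equivalent to $q>2$; hence under the hypothesis $q>2$ the limit is strictly larger than $q'$ and a threshold exists. For~\eqref{eq67f25035} there are two inequalities: the right one, $\frac{2q}{q-2}>q'$, does not involve $p$ and reduces (cross-multiplying, both denominators being positive for $q>2$) to $q>0$, hence holds; the left one, $s>\frac{2q}{q-2}$, follows from $s\to q$ once we check $q>\frac{2q}{q-2}$, which is equivalent to $q>4$. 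Finally, for~\eqref{eq67f2503b}, combining the limits yields
\[
\frac1p+\frac1s+\frac{2}{\alpha}\longrightarrow \frac1q \qquad\text{as } p\to\infty ,
\]
and since $q>1$ gives $\frac1q<1$, the non-strict inequality $\le 1$ holds for all $p$ beyond some $\hat p$.

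I do not expect a genuine obstacle: each statement reduces to a one-line limit together with an elementary algebraic equivalence ($q>q'\iff q>2$, and $q>\frac{2q}{q-2}\iff q>4$). The only point requiring a little care is that each threshold must be chosen so that the inequality holds for \emph{all} $p>\hat p$, not merely near $\hat p$; this is automatic because in every case the limit satisfies the inequality strictly (with $\tfrac1q<1$ covering the non-strict case), so convergence guarantees the inequality on the entire tail $(\hat p,\infty)$.
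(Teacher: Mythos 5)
Your proposal is correct and follows essentially the same route as the paper: each inequality is reduced to the limit of the relevant rational expression as $p\to\infty$ (namely $1$, $q$, and $\tfrac1q$ respectively), combined with the elementary equivalences $q>q'\iff q>2$ and $q>\tfrac{2q}{q-2}\iff q>4$. No gaps.
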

\begin{proof}
	First, since 
	\[
	\lim_{p\to\infty} \frac{p^2}{(p-1)(p-2)} = 1 < q,
	\]
	then~\eqref{eq67f239dd} is satisfied for $p$ large enough.\\
	Second, notice that  
	\[
	\lim_{p\to\infty} \frac{ pq(p-2)}{p^2+q(p-2)} = q .
	\]
	So, if $q>2$, then $q> \frac{q}{q-1}$ and thus we obtain~\eqref{eq67f239e4} for $p$ large enough.
	If $q>4$, then $q>\frac{2q}{q-2} > \frac{q}{q-1}$ and we obtain~\eqref{eq67f25035} for $p$ large enough.\\
	Third, we have that
	\[
	\lim_{p\to\infty} \frac{1}{p} + \frac{1}{s} + \frac{2}{\alpha}
	= \frac1q .
	\]
	Since $q>1$, then~\eqref{eq67f2503b} is satisfied for $p$ large enough.
\end{proof}

\printbibliography
\end{document}